\documentclass[oneside,english]{amsart}

\newcommand\version{October 29, 2026}
\usepackage[top=2.5cm, bottom=3.5cm, right=2.2cm, left=2.2cm]{geometry}

\usepackage{color}

\usepackage[T1]{fontenc}
\usepackage[latin9]{inputenc}
\usepackage{amsbsy}
\usepackage{amstext}
\usepackage{amsthm}
\usepackage{amssymb}
\usepackage{mathtools}
\usepackage{enumerate}
\usepackage[colorlinks=true]{hyperref}
\hypersetup{
	colorlinks=true,%
	citecolor=red,%
	filecolor=black,%
	linkcolor=blue,%
	urlcolor=blue
}

\makeatletter
\newtheorem{thm}{Theorem}[section]
\newtheorem{lem}[thm]{Lemma}
\newtheorem{cor}[thm]{Corollary}

\newtheorem{remark}[thm]{Remark}

\providecommand{\definitionname}{Definition}
\theoremstyle{plain}

\newcommand{\eq}[1]{\begin{equation}\allowdisplaybreaks\begin{alignedat}{2} #1 \end{alignedat}\end{equation}}

  \newcommand{\R}{\mathbb{R}}

   \newcommand{\sech}{\operatorname{sech}}
   \newcommand{\artanh}{\operatorname{artanh}}

\def\<{\langle}
\def\>{\rangle}

\def \intsn {\int_{\mathbb{S}^n}}
\def \gSn{g_{\mathbb{S}^n}}
\def\triangle {\Delta}

\newcommand{\sgn}{\operatorname{sgn}}

\def\circledwedge{\setbox0=\hbox{$\bigcirc$}\relax \mathbin {\hbox
to0pt{\raise.5pt\hbox to\wd0{\hfil $\wedge$\hfil}\hss}\box0 }}
 
\makeatother

\usepackage{babel}

\allowdisplaybreaks

\numberwithin{equation} {section}

\begin{document}

\title[Q-R Sobolev inequality]{A new sharp Sobolev inequality between the $Q$-curvature\\ and the scalar curvature on Euclidean space}

\author{Rupert L. Frank}
\address[R. L. Frank]{Mathematisches Institut, Ludwig-Maximilians Universit\"at M\"unchen, Theresienstr. 39 \\80333 M\"unchen, Germany\\
and Munich Center for Quantum Science and Technology, Schellingstr. 4 \\80799 M\"unchen, Germany}
\email{r.frank@lmu.de}

\author{Yuxin Ge}
\address[Y.~Ge]{Institut de Math\'ematiques de Toulouse\\
Universit\'e Paul Sabatier\\
118, route de Narbonne\\
31062 Toulouse Cedex, France}
\email{yge@math.univ-toulouse.fr}

\author{Guofang Wang}%\thanks{The second named author is partly supported by SFB/TR71 of DFG}
\address[G.~Wang]{Albert-Ludwigs-Universit\"at Freiburg,
Mathematisches Institut,
Ernst-Zermelo-Str. 1,
D-79104 Freiburg, Germany}
\email{guofang.wang@math.uni-freiburg.de}

\author{Wei Wei}
\address[W.~Wei]{Nanjing University, School of Mathematics, Nanjing 210093, P.R. China}
\email{wei\_wei@nju.edu.cn}

\begin{abstract}
We establish a sharp Sobolev-type inequality on $\mathbb{R}^n$, $n\geq 5$, that interpolates between the second-order energy and a weighted first-order term. Specifically, we determine the sharp constant $S_{4,2}$ in the inequality
\begin{equation*}
\int_{\mathbb R^n} (\Delta u)^2\,dx
\ge
S_{4,2} \left(\int_{\mathbb{R}^n}|\nabla u|^{2}|u|^{\frac{4}{n-4}}\,dx\right)^{\frac{n-4}{n-2}},
\end{equation*}
for any $u\in\dot H^2(\mathbb{R}^n)$, and we classify the cases of equality. 
This solves the problem of minimizing the total $Q$-curvature of among metrics on the sphere, conformal to the standard metric, with fixed total scalar curvature.

\end{abstract}

\date{\version}

\subjclass[2020]{Primary 53C21; Secondary 58J60, 46E35, 35J30.}
\keywords{$Q$-curvature, scalar curvature, sharp Sobolev inequality, conformal geometry, concentration--compactness, rearrangement.}

\maketitle

%%%%%%%%%%%%%%%%%%%%%%%%%%%%%%%%%
%%%%%%%%%%%%%%%%%%%%%%%%%%%%%%%%%

\section{Introduction}

The classical Sobolev inequalities state that
\[
\int_{\mathbb R^n}|\nabla u|^2\,dx\ge S_{2,0}\left(\int_{\mathbb R^n} |u|^{\frac{2n}{n-2}}\,dx\right)^{\frac{n-2}{n}},
\qquad
\int_{\mathbb R^n}|\Delta u|^2\,dx\ge S_{4,0}\left(\int_{\mathbb R^n} |u|^{\frac{2n}{n-4}}\,dx\right)^{\frac{n-4}{n}} \,.
\]
Here we use the convention that $S_{2,0}$ and $S_{4,0}$ denote the (known) optimal constants in these inequalities. We refer to~\cite{Franknotes} for references and background on sharp Sobolev inequalities. Via stereographic projection, these inequalities are equivalent to sharp conformal inequalities on the sphere: for every metric $g\in[g_{\mathbb S^n}]$, the conformal class of the canonical round metric, we have
\begin{equation}\label{ineq:S1}
\frac{\intsn R_g\,dv_g}{\bigl(\mathrm{vol}(g)\bigr)^{\frac{n-2}{n}}}
\ge
\frac{\intsn R_{\gSn}\,dv_{\gSn}}{\bigl(\mathrm{vol}(\gSn)\bigr)^{\frac{n-2}{n}}}
\end{equation}
and
\begin{equation}\label{ineq:S2}
\frac{\intsn Q_g\,dv_g}{\bigl(\mathrm{vol}(g)\bigr)^{\frac{n-4}{n}}}
\ge
\frac{\intsn Q_{\gSn}\,dv_{\gSn}}{\bigl(\mathrm{vol}(\gSn)\bigr)^{\frac{n-4}{n}}}.
\end{equation}
Here $R_g$ is the scalar curvature of $g$ and $Q_g$ is its $Q$-curvature, introduced by Branson in \cite{Branson1985}. The $Q$-curvature is a higher-order analogue of the scalar curvature. 

On $\mathbb S^n$ the scalar and $Q$-curvatures can both be described in a unified way in terms of the GJMS operators: for $k=1,2$,
\[
Q_{2k}\coloneqq\frac{2}{n-2k}L_{2k}^g(1),
\qquad
L_{2k}\coloneqq\prod_{j=1}^k\left(-\Delta+\frac{(n-2j)(n+2j-2)}{4}\right).
\]

A different, nonlinear generalization of scalar curvature is provided by the $\sigma_k$-curvatures; in particular, $\sigma_1$ agrees with the scalar curvature up to a constant factor. Sharp Sobolev-type inequalities comparing $\sigma_k$ and $\sigma_\ell$ ($k>\ell\ge0$) were established in~\cite{GVW,GWDuke,GeWangWei,GWCAG}. %(\textcolor{red}{Please add others, do not miss, for example, Chang-Yang, Brendle-Via})

The inequality \eqref{ineq:S2} compares total $Q$-curvature with volume. It is therefore natural to ask for sharp comparisons between different total curvatures within a fixed conformal class. Motivated by the sharp inequality between total $\sigma_2$ and total $\sigma_1$, Ge--Wang--Wei~\cite{GeWangWei} recently proved that if $g\in[g_{\mathbb S^n}]$ satisfies $R_g>0$, then
\begin{equation}\label{SharpbetweenQandR}
\frac{\intsn Q_g\,dv_g}{\bigl(\intsn R_g\,dv_g\bigr)^{\frac{n-4}{n-2}}}
\ge
\frac{\intsn Q_{\gSn}\,dv_{\gSn}}{\bigl(\intsn R_{\gSn}\,dv_{\gSn}\bigr)^{\frac{n-4}{n-2}}}.
\end{equation}
The positivity assumption on $R_g$ restricts the scope of \eqref{SharpbetweenQandR} and, more importantly, prevents a direct translation into an equivalent sharp inequality on $\mathbb R^n$.

Our first result is that \eqref{SharpbetweenQandR} remains true without any sign condition on the scalar curvature.

\begin{thm}\label{maintheorem}
Let $n\ge 5$. For every metric $g$ conformal to the round metric $\gSn$,% with $\int_{\mathbb{S}^n} R_g dv_g>0$, 
we have
\[
\frac{\intsn Q_g\,dv_g}{\bigl(\intsn R_g\,dv_g\bigr)^{\frac{n-4}{n-2}}}
\ge
\frac{\intsn Q_{\gSn}\,dv_{\gSn}}{\bigl(\intsn R_{\gSn}\,dv_{\gSn}\bigr)^{\frac{n-4}{n-2}}}.
\]
Equality holds if and only if $g=c\,\Phi^{*}g_{\mathbb S^{n}}$ for some $c>0$ and $\Phi\in\operatorname{Conf}(\mathbb S^{n})$.
\end{thm}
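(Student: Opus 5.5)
We would reduce Theorem~\ref{maintheorem} to the Euclidean inequality stated in the abstract via stereographic projection. Write $g=u^{\frac{4}{n-4}}|dx|^2$ on $\mathbb R^n$, with $u>0$ and $u$ decaying like $|x|^{4-n}$ at infinity. The conformal covariance of the Paneitz operator gives
\[
\intsn Q_g\,dv_g=\tfrac{2}{n-4}\int_{\mathbb R^n}(\Delta u)^2\,dx .
\]
For the scalar curvature we write $g=w^{\frac{4}{n-2}}|dx|^2$ with $w=u^{\frac{n-2}{n-4}}$. Then
\[
\intsn R_g\,dv_g=\tfrac{4(n-1)}{n-2}\int|\nabla w|^2\,dx=c_n\int|\nabla u|^2u^{\frac{4}{n-4}}\,dx,
\qquad c_n=\tfrac{4(n-1)}{n-2}\Bigl(\tfrac{n-2}{n-4}\Bigr)^2 .
\]
With this dictionary:
\begin{itemize}
\item Theorem~\ref{maintheorem} is equivalent to the inequality $\int(\Delta u)^2\ge S_{4,2}\bigl(\int|\nabla u|^2|u|^{\frac{4}{n-4}}\bigr)^{\frac{n-4}{n-2}}$ restricted to positive $u$.
\item The optimal constant must be $S_{4,2}$ attained by the bubble $u_0=(1+|x|^2)^{-\frac{n-4}{2}}$.
\item Equality cases correspond to the bubbles $c\,u_0((x-x_0)/\lambda)$, that is, to $c\,\Phi^*g_{\mathbb S^n}$.
\end{itemize}
Note that $\int R_g\,dv_g>0$ automatically here, since it is a Dirichlet integral. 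So the sign issue of \eqref{SharpbetweenQandR} disappears once we work on $\mathbb R^n$.

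\textbf{Step 1: existence of a minimizer.} We would minimize
\[
\mathcal F(u)=\frac{\int(\Delta u)^2}{\bigl(\int|\nabla u|^2|u|^{\frac4{n-4}}\bigr)^{\frac{n-4}{n-2}}}
\]
over $\dot H^2$. The denominator is well defined: by Hölder and Sobolev, $\int|\nabla u|^2|u|^{\frac4{n-4}}\le \|\nabla u\|_{2n/(n-2)}^2\|u\|_{2n/(n-4)}^{4/(n-4)}\lesssim \|\Delta u\|_2^{\frac{2(n-2)}{n-4}}$. The quotient is invariant under dilations, translations and scaling $u\mapsto cu$, so we would apply Lions' concentration--compactness to a normalized minimizing sequence. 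Strict subadditivity holds because the exponent $\frac{n-4}{n-2}<1$, which rules out dichotomy, and vanishing is excluded by the Hölder bound together with the profile decomposition in $\dot H^2$. This yields a minimizer $u$.

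\textbf{Step 2: symmetry and positivity of the minimizer.} Replacing $u$ by $|u|$ is not directly allowed in $\dot H^2$. Instead we would use the superharmonic trick: let $v=(-\Delta)^{-1}|\Delta u|$. Then $v\ge|u|$ and $|\nabla v|$ dominates pointwise in a suitable integrated sense. Since the denominator is not monotone in $u$ alone, it is cleaner to rewrite it as $\frac{(n-4)^2}{(n-2)^2}\int|\nabla(|u|^{\frac{n-2}{n-4}})|^2$. Then we would apply Pólya--Szegő together with Talenti's comparison for $-\Delta v=f^*$ to reduce to radial, positive, decreasing $u$. We would thus aim for a minimizer that is positive, radial and strictly decreasing.

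\textbf{Step 3: uniqueness of the radial critical point.} The Euler--Lagrange equation is
\[
\Delta^2u=\mu\Bigl(-\operatorname{div}\bigl(u^{\frac4{n-4}}\nabla u\bigr)+\tfrac{2}{n-4}u^{\frac{4}{n-4}-1}|\nabla u|^2\Bigr),
\]
which is equivalently $\Delta^2u=-\mu'\,u^{\frac{4}{n-4}-\frac{n-2}{n-4}+1}\cdot\Delta(u^{\frac{n-2}{n-4}})$. We would transport it to $\mathbb S^n$, where it says that $Q_g$ is a constant multiple of $R_g$ pointwise. We would then show that such solutions are round, either by a moving-plane/Kelvin-transform argument for the fourth-order system $(-\Delta u=h,\ -\Delta h=\dots)$ or by an Obata-type integral identity in the spirit of the $\sigma_2$--$\sigma_1$ results. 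Comparison with $u_0$ then gives the value of $S_{4,2}$ and the equality classification. We would combine this with strictness in Step 2, which forces every minimizer to be a translate of a radial one.

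\textbf{Main obstacle.} We expect Step 3, the classification of solutions of the quasilinear fourth-order equation, to be the hardest, because the right-hand side has gradient terms that break the standard maximum-principle machinery. We would try the moving-plane method first, after verifying that $-\Delta u>0$ for minimizers. A secondary delicate point is making the rearrangement in Step 2 rigorous, given the mixed first-order weight in the denominator.
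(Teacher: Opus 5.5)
Your reduction to $\mathbb{R}^n$ is the same as the paper's, and Step~1 is a standard route. The paper uses the Dietze--Nam form of concentration compactness, with an explicit Brezis--Lieb type splitting of $\int|\nabla u|^2|u|^{\frac4{n-4}}$.

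\textbf{First gap: Step~2.} The tool you choose goes the wrong way. P\'olya--Szeg\H{o} gives $\int|\nabla (u^*)^{p}|^2\le\int|\nabla |u|^{p}|^2$ with $p=\frac{n-2}{n-4}$. So symmetrizing \emph{decreases} the denominator and can only raise the quotient. The pointwise bounds $|u|\le v$ and $u^*\le w$ (Talenti) do not order Dirichlet integrals either: a function lying above another can have far smaller gradient energy. Your claim that $|\nabla v|$ dominates ``in a suitable integrated sense'' is exactly what needs proof. The missing idea is the integration by parts
\[
\frac{n}{n-4}\int|u|^{\frac4{n-4}}|\nabla u|^2\,dx=\int(-\Delta u)\,u|u|^{\frac4{n-4}}\,dx ,
\]
which turns the denominator into a pairing of $-\Delta u$ with an increasing function of $u$. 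Then $\int(-\Delta u)u|u|^{\frac4{n-4}}\le\int|\Delta u|\,v^{\frac n{n-4}}=\int(-\Delta v)v^{\frac n{n-4}}$. By Hardy--Littlewood and $u^*\le w$, the same quantity is also $\le\int(-\Delta u)^*(u^*)^{\frac n{n-4}}\le\int(-\Delta w)w^{\frac n{n-4}}$. Meanwhile $\int(\Delta v)^2=\int(\Delta w)^2=\int(\Delta u)^2$. Even then, the rigidity you need later (every minimizer is radial about some point) is not automatic. The paper gets it from an equality analysis at a maximum point of $u$, using the strict Hardy--Littlewood inequality for the strictly decreasing kernel $|y|^{2-n}$.

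\textbf{Second gap: Step~3.} This step is only a statement of intent. You give no argument classifying positive solutions of $Q_g=cR_g$, and, as you note yourself, the gradient term obstructs moving planes. The paper never classifies solutions of the Euler--Lagrange equation. Its key observation, missing from your list of symmetries, is that $E$ is also invariant under inversions. Hence every Kelvin transform of a minimizer is again a minimizer, and by Step~2 it is radial and strictly decreasing about some point. The argument then runs as follows.
\begin{enumerate}
\item Let $\alpha=\lim_{|x|\to\infty}|x|^{n-4}u(x)$. It is positive, since otherwise the Kelvin transform of $u$ would be a minimizer tending to $0$ at the origin.
\item For each $a$, choose $\lambda(a)$ with $\lambda(a)^{n-4}u(a)=\alpha$. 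The minimizer $v(x)=\lambda(a)^{\frac{n-4}{2}}|x|^{4-n}u\bigl(\lambda(a)(x/|x|^2-e)+a\bigr)$ then satisfies $\lim_{x\to0}v(x)=v(e)$.
\item Since $v$ is radial and strictly decreasing, it is symmetric about the hyperplane $\{x\cdot e=\frac12\}$. This says exactly that $u$ is invariant under inversion in $\partial B_{\lambda(a)}(a)$, for every $a$.
\item The Li--Zhu lemma then forces $u$ to be a bubble.
\end{enumerate}
Alternatively, since minimizers are radial, you could classify positive homoclinics of the radial ODE in logarithmic variables, as the paper also does. That requires substantial additional structure: the factorization with perfect squares, convexity of $v^{-2/(n-4)}$, the trapping and ordering lemmas, and a Picone-type identity.
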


Here $\operatorname{Conf}(\mathbb S^{n})$ denotes the set of conformal diffeomorphisms of $(\mathbb S^n,[\gSn])$, that is, the set of M\"obius transformations.

Note that the Yamabe constant on the round metric $\gSn$ is positive, so that for every metric $g$ conformal to the round metric $\gSn$,  $\int_{\mathbb{S}^n} R_g dv_g>0$. Moreover, Theorem \ref{maintheorem}, together with \eqref{ineq:S1}, implies \eqref{ineq:S2}.

For us, it will be more convenient to work on $\mathbb R^n$. Let $\dot H^2(\mathbb{R}^n)$ denote the completion of $C_0^{\infty}(\mathbb{R}^n)$ with respect to the norm $\|u\|_{\dot H^2(\mathbb{R}^n)}\coloneqq\left(\int_{\mathbb{R}^n}(\Delta u)^2\,dx\right)^{1/2}$ and define
\[
S_{4,2}\coloneqq\inf_{u\in \dot H^2(\mathbb{R}^n)\setminus\{0\}} E(u),
\qquad
E(u)\coloneqq\frac{\int_{\mathbb{R}^n}(\Delta u)^{2}\,dx}{\left(\int_{\mathbb{R}^n}|\nabla u|^{2}|u|^{\frac{4}{n-4}}\,dx\right)^{\frac{n-4}{n-2}}}.
\]
It is a simple consequence of the Sobolev embeddings $\dot H^2(\R^n)\subset \dot W^{1,2n/(n-2)}\subset L^{2n/(n-4)}(\R^n)$ and H\"older's inequality that $S_{4,2}>0$. Our goal is to determine the explicit value $S_{4,2}$ and characterize all functions $u$ with $E(u)=S_{4,2}$.

The following is our second main result.

\begin{thm}\label{mainthm2}
Let $n\ge 5$. Any function $u\in \dot H^2(\mathbb{R}^n)$ satisfies
\begin{equation}
\int_{\mathbb R^n} (\Delta u)^2\,dx
\ge
\left(\frac{n-2}{n-4}\right)^{\frac{2(n-4)}{n-2}}
\frac{\Gamma\left(\frac{n}{2}+2\right)}{\Gamma\left(\frac{n}{2}-2\right)}
\left[\frac{\Gamma\left(\frac{n}{2}+1\right)}{\Gamma\left(\frac{n}{2}-1\right)}\right]^{-\frac{n-4}{n-2}}
|\mathbb{S}^n|^{\frac{2}{n-2}}
\left(\int_{\mathbb{R}^n}|\nabla u|^{2}|u|^{\frac{4}{n-4}}\,dx\right)^{\frac{n-4}{n-2}}.
\end{equation}
Equality holds if and only if there exist $c\in\mathbb{R}$, $\lambda>0$, and $x_0\in\mathbb{R}^n$ such that
\[
 u(x)=c\left(\frac{2\lambda}{\lambda^2+|x-x_0|^2}\right)^{\frac{n-4}{2}}.
\]
\end{thm}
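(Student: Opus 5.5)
We prove Theorem~\ref{mainthm2} by showing that $S_{4,2}$ is attained, that every minimizer may be taken radial and nonnegative, and that the only radial positive critical points are the bubbles $U_\lambda(x)=(2\lambda/(\lambda^2+|x|^2))^{(n-4)/2}$; the stated constant is then $E(U_1)$. Set $p=\frac{4}{n-4}$. The substitution $w=|u|^{\frac{n-2}{n-4}}\sgn u$ gives
\[
|\nabla w|^2=\Bigl(\tfrac{n-2}{n-4}\Bigr)^2|u|^{p}|\nabla u|^2,
\]
so the denominator equals $\bigl(\frac{n-4}{n-2}\bigr)^{\frac{2(n-4)}{n-2}}\|\nabla w\|_2^{\frac{2(n-4)}{n-2}}$. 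This explains the factor $\bigl(\frac{n-2}{n-4}\bigr)^{2(n-4)/(n-2)}$, and the remaining factors should come out of the conformal quantities $\int Q\,dv$ and $\int R\,dv$ on the round sphere evaluated at $U_1$.

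\textbf{Step 1 (existence).} Both numerator and denominator are invariant, up to the same scaling factor, under $u\mapsto \lambda^{(n-4)/2}u(\lambda(\cdot-x_0))$. Take a minimizing sequence normalized by $\int|\nabla u|^2|u|^p=1$. Apply Lions' concentration--compactness to the measures $(\Delta u_k)^2\,dx$ and $|\nabla u_k|^2|u_k|^p\,dx$, modulo translations and dilations. The functional is strictly subadditive: the denominator has exponent $\frac{n-4}{n-2}<1$, so splitting mass strictly increases the quotient, which rules out dichotomy. Vanishing is excluded by a local Sobolev-type bound, using the Sobolev inequality for $\nabla u$ together with H\"older. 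Compactness up to symmetries then produces a minimizer.

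\textbf{Step 2 (reduction to radial nonnegative functions).} This is the main obstacle. The usual rearrangement tricks are not directly available: $\|\Delta u\|_2$ does not decrease under symmetric decreasing rearrangement, and replacing $u$ by $|u|$ is not allowed in $\dot H^2$. I would use Talenti-type comparison. Write $f=-\Delta u$ and let $v$ solve $-\Delta v=f^*$. Talenti's theorem gives $v\ge u^*$ pointwise, with $\|\Delta v\|_2=\|\Delta u\|_2$.

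What is then needed is that the denominator does not decrease, i.e. $\int|\nabla v|^2v^p\ge\int|\nabla u|^2|u|^p$. By the substitution above this is the comparison $\|\nabla W\|_2\ge\|\nabla w\|_2$, where $W=v^{\frac{n-2}{n-4}}$. Pure P\'olya--Szeg\H{o} gives the reverse direction, so a finer argument is required. I would aim to use the pointwise gradient comparison in Talenti's method in distribution-function form, namely $\int_{\{|u|>t\}}|\nabla u|\le\int_{\{v>\cdot\}}|\nabla v|$ along level sets, combined with the coarea formula. Failing that, I would use a two-level-set rearrangement argument directly on the Euler--Lagrange equation. Equality analysis in Talenti's theorem would then force minimizers to be, up to translation, radial and of one sign.

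\textbf{Step 3 (classification).} For a positive radial minimizer, the Euler--Lagrange equation is
\[
\Delta^2 u=\mu\Bigl(-\operatorname{div}(u^p\nabla u)+\tfrac{p}{2}\,u^{p-1}|\nabla u|^2\Bigr)=\mu\Bigl(-\tfrac{p}{2}u^{p-1}|\nabla u|^2-u^p\Delta u\Bigr).
\]
It is invariant under Kelvin transform of weight $n-4$; this is the conformal covariance behind Theorem~\ref{maintheorem}, and it should be checked directly. On the sphere the equation reads $P_g$-type equals a combination of $R_g$-type terms. I would then show that $u$ is a bubble, either by the method of moving spheres or via a Pohozaev/Obata-type integral identity on the sphere, in the spirit of the $\sigma_2$--$\sigma_1$ argument of \cite{GeWangWei}. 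Once Step~2 guarantees positivity of the minimizer, the earlier result \eqref{SharpbetweenQandR} with $R_g>0$ may partly be invoked here.

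Finally, evaluate $E(U_1)$ via stereographic projection. This uses $\int\!Q\,dv=\frac{n(n^2-4)}{8}|\mathbb S^n|$ and $\int\!R\,dv=n(n-1)|\mathbb S^n|$ in terms of the $\Gamma$-ratios, which yields the constant in the statement. The equality cases follow from the classification together with the symmetries.
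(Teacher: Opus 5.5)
Your Step 1 and the evaluation of $E(U_1)$ are fine in outline. The genuine gap is Step 3. You name two tools but carry out neither, and neither is routine here.

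\emph{Moving spheres.} The right-hand side $\Lambda u^{\frac{2}{n-4}}\bigl(-\Delta u^{\frac{n-2}{n-4}}\bigr)$ contains $\Delta u$ and $|\nabla u|^2$, has no a priori sign, and is not monotone in $u$. So the comparison step that drives moving spheres (a maximum principle, or an integral representation with a positive monotone nonlinearity) is not available.

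\emph{Obata route.} You give no identity at all.

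\emph{Fallback to \eqref{SharpbetweenQandR}.} This also fails, because $u>0$ is not $R_g>0$. For $g=u^{\frac4{n-4}}|dx|^2$, the condition $R_g>0$ means $-\Delta u^{\frac{n-2}{n-4}}>0$, and this does not follow from $u>0$ and $-\Delta u\ge0$. The paper obtains it only through the ODE analysis (Corollary~\ref{cor:nonlinear-positive}).

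The missing idea is that the symmetry result already yields the Kelvin invariance required by the Li--Zhu lemma, with no maximum principle. The functional $E$ is M\"obius invariant, and \emph{every} minimizer is radial and strictly decreasing about its maximum point. Let $\alpha=\lim_{|x|\to\infty}|x|^{n-4}u(x)$, which is positive by applying the symmetry to the Kelvin transform. Let $\lambda^{n-4}u(a)=\alpha$. Then the minimizer $v(x)=\lambda^{\frac{n-4}{2}}|x|^{4-n}u\bigl(\lambda(x/|x|^2-e)+a\bigr)$ satisfies $\lim_{x\to0}v(x)=v(e)$. Hence its center lies on the hyperplane $\{x\cdot e=\frac12\}$, and $v$ is symmetric with respect to it. Unwinding gives $u(y)=\bigl(\lambda/|y-a|\bigr)^{n-4}u\bigl(a+\lambda^2(y-a)/|y-a|^2\bigr)$ for every $a$, and the Li--Zhu lemma gives the bubble. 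Section~\ref{sec:classificationode} offers an ODE alternative.

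In Step 2 the inequality is salvageable, but the level-set inequality you wrote is not the right one. What holds for a.e.\ $t>0$, with $f=-\Delta u$, is the flux comparison
\[
\int_{\{|u|=t\}}|\nabla u|\,d\mathcal H^{n-1}=\int_{\{u>t\}}f-\int_{\{u<-t\}}f\le\int_{\{|u|>t\}}|f|\le\int_{\{u^*>t\}}f^*\le\int_{\{v>t\}}f^*=\int_{\{v=t\}}|\nabla v|\,d\mathcal H^{n-1}.
\]
This uses the divergence theorem, Hardy--Littlewood, and $u^*\le v$. Integrating against $t^{\frac4{n-4}}\,dt$ with the coarea formula gives $\int|u|^{\frac4{n-4}}|\nabla u|^2\le\int v^{\frac4{n-4}}|\nabla v|^2$. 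This is the level-set form of the paper's identity $\frac{n}{n-4}\int|u|^{\frac4{n-4}}|\nabla u|^2=\int(-\Delta u)\,u|u|^{\frac4{n-4}}$, which turns the denominator into a Hardy--Littlewood pairing of $|\Delta u|$ with $|u|^{\frac{n}{n-4}}$.

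What remains unproved is your sentence about ``equality analysis in Talenti's theorem''. It is not a black box, and it is exactly the statement that every minimizer is one-signed and radial, which Step 3 relies on. The paper proves it in two parts:
\begin{enumerate}
\item \emph{Sign.} Compare with $\tilde v$ solving $-\Delta\tilde v=|\Delta u|$ and use strict positivity of the Newton kernel.
\item \emph{Symmetry.} Equality forces $v=u^*$ on a ball where $f^*>0$, so $\max u=\max v=v(0)$. Applying the strict Hardy--Littlewood inequality to the Newton representation of $u$ at a maximum point $x_0$ then gives $-\Delta u(\cdot+x_0)=(-\Delta u)^*$.
\end{enumerate}
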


By stereographic projection and conformal covariance, Theorem~\ref{maintheorem} is a direct consequence of Theorem \ref{mainthm2} for positive smooth functions $u$. In fact, the two theorems are equivalent, since the smoothness assumption can be relaxed to $\dot H^2(\R^n)$ by approximation and since the case of nonnegative functions implies the general case; see the proof of Lemma~\ref{lem:radial-minimizer} below.

The proof proceeds as follows. We prove the existence of minimizers via a concentration compactness argument. We then obtain radial symmetry by symmetric decreasing rearrangement and Talenti's theorem. Finally, adapting the approach of Chen--C.~Li--Ou~\cite{ChenLiOu}, we derive the explicit form of minimizers and classify all of them using a lemma of Y.~Li and Zhu~\cite{LiZhu}. We also provide an alternative ODE proof of the classification result.

During the preparation of  this paper, we learned that Han Lu~\cite{lu2026} proved a Liouville theorem for the $Q/R$ equation in $\R^n$ for positive solutions $u$ with $-\Delta u\ge0$. Our argument does not rely on such a Liouville theorem and may extend to sharp Sobolev inequalities comparing $Q_{2k}$ and $R_g$.

\medskip
\noindent{\it Acknowledgments.}
This work was carried out while W.~Wei was visiting the University of Freiburg, supported by an Alexander von Humboldt research fellowship. She thanks the Institute of Mathematics at the University of Freiburg for its hospitality. She is also partially supported by NSFC (Grant No.~12571218). R.L.F.~acknowledges partial support through the German Research Foundation grants EXC-2111-390814868 and TRR 352--Project-ID 470903074, and G.W. through IRTG 3132.%\texttt{Guofang: Graduiertenkolleg?}

%%%%%%%%%%%%%%%%%%%%%%%%%%%%%%%%

%%%%%%%%%%%%%%%%%%%

%%%%%%%%%%%%%%%%%%%

\section{Existence of minimizers}\label{sec:existence}

In this section we show that the minimization problem defining $S_{4,2}$ admits a minimizer. In fact, we prove more, namely that minimizing sequences are relatively compact up to symmetries.

\begin{thm}\label{main2}
	Let $(u_i)\subset \dot H^2(\mathbb{R}^n)$ be a sequence with
	\begin{equation}\label{eq:sequenceass}
		\int_{\mathbb{R}^n} (\Delta u_i)^2\,dx \to 1,
		\qquad
		\int_{\mathbb{R}^n} |u_i|^{\frac{4}{n-4}}|\nabla u_i|^2\,dx \to \left(S_{4,2}\right)^{-\frac{n-2}{n-4}}.
	\end{equation}
	Then, after passing to a subsequence and applying a translation and a dilation, $u_i$ converges strongly in $\dot H^2(\mathbb{R}^n)$. In particular, the infimum defining $S_{4,2}$ is attained.
\end{thm}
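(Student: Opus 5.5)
The plan is to reduce the problem to the standard concentration--compactness framework for the critical embedding $\dot H^2\subset L^{2n/(n-4)}$, applied to the functional $N(u)\coloneqq\int|u|^{\frac{4}{n-4}}|\nabla u|^2\,dx$. The key observation is that $N(u)=c_n\int|\nabla(|u|^{\frac{n-2}{n-4}})|^2dx$ with $c_n=(\frac{n-4}{n-2})^2$. Thus $N(u)=c_n\|\nabla w\|_2^2$ for $w=|u|^{\frac{n-2}{n-4}}$, and both $N$ and $\|\Delta u\|_2^2$ are invariant under $u\mapsto\lambda^{\frac{n-4}{2}}u(\lambda(\cdot-x_0))$. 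The infimum problem is therefore of the scale-invariant, translation-invariant type. I will first normalize by a dilation and translation so that the L\'evy concentration function of $\rho_i\coloneqq|u_i|^{\frac{4}{n-4}}|\nabla u_i|^2$ satisfies $\sup_y\int_{B_1(y)}\rho_i=\frac12 S_{4,2}^{-\frac{n-2}{n-4}}$, attained at $y=0$. Since $u_i$ is bounded in $\dot H^2$, a subsequence converges weakly in $\dot H^2$ to some $u$, and by Rellich locally strongly in $\dot H^1_{loc}$ and $L^p_{loc}$ for subcritical $p$. Hence $\nabla u_i\to\nabla u$ a.e. as well.

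The main step is a Brezis--Lieb type splitting. Set $v_i=u_i-u$. Weak convergence gives $\|\Delta u_i\|^2=\|\Delta u\|^2+\|\Delta v_i\|^2+o(1)$. For the nonlinear term, I will prove
\[N(u_i)=N(u)+N(v_i)+o(1).\]
This follows from a.e. convergence of $u_i$ and $\nabla u_i$ together with the elementary bound
\[\bigl||a+b|^{\frac4{n-4}}|\xi+\eta|^2-|b|^{\frac4{n-4}}|\eta|^2\bigr|\le\varepsilon\,(|b|^{\frac{2n}{n-4}}+|\eta|^{\frac{2n}{n-2}})+C_\varepsilon(|a|^{\frac{2n}{n-4}}+|\xi|^{\frac{2n}{n-2}}),\]
obtained via Young's inequality with exponents $\frac{n-2}{2}$ and $\frac{n-2}{n-4}$. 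It also uses the uniform bounds in $L^{2n/(n-4)}$ and $\dot W^{1,2n/(n-2)}$ supplied by the Sobolev embedding. Combining the two splittings with $N(w)\le S_{4,2}^{-\frac{n-2}{n-4}}\|\Delta w\|^{\frac{2(n-2)}{n-4}}$ and the strict concavity of $t\mapsto t^{\frac{n-2}{n-4}}$ (the exponent exceeds $1$), I will set $t=\|\Delta u\|^2\in[0,1]$. Then $1=S^{\frac{n-2}{n-4}}\lim N(u_i)\le t^{\frac{n-2}{n-4}}+(1-t)^{\frac{n-2}{n-4}}$, which forces $t\in\{0,1\}$. If $t=1$, weak convergence plus convergence of norms gives strong convergence in $\dot H^2$, and $u$ is a minimizer.

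The main obstacle is excluding $t=0$, i.e. $u=0$, which is where the normalization of the concentration function is used. If $u=0$, I will localize with a cutoff $\varphi\in C_0^\infty(B_1(y))$ and apply the inequality to $\varphi u_i$. Since $u_i\to0$ in $H^1_{loc}$ and $L^2_{loc}$, the commutator terms involving $\nabla\varphi,\Delta\varphi$ vanish in $L^2$, so $\|\Delta(\varphi u_i)\|^2=\int\varphi^2(\Delta u_i)^2+o(1)$. Similarly, $N(\varphi u_i)=\int|\varphi|^{\frac{2(n-2)}{n-4}}\rho_i+o(1)$. This yields a reverse-H\"older estimate on the limiting measures $\mu=\lim(\Delta u_i)^2$ and $\nu=\lim\rho_i$, namely
\[\nu(\varphi^{\frac{2(n-2)}{n-4}})\le S^{-\frac{n-2}{n-4}}\mu(\varphi^2)^{\frac{n-2}{n-4}}.\]
Lions' lemma then shows that $\nu$ is purely atomic, $\nu=\sum\nu_j\delta_{x_j}$, with $\nu_j\le S^{-\frac{n-2}{n-4}}\mu_j^{\frac{n-2}{n-4}}$. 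Here I must also rule out mass escaping to infinity. I will handle this with the same splitting at infinity using cutoffs $1-\varphi_R$, together with the concavity argument applied to $\mu,\nu$ (with $\sum\mu_j+\mu_\infty\le1$ and $\sum\nu_j+\nu_\infty=S^{-\frac{n-2}{n-4}}$). Concavity forces a single atom carrying all the mass. That contradicts $\sup_y\nu_i(B_1(y))\le\frac12 S^{-\frac{n-2}{n-4}}$: an atom of full mass would give some unit ball, centered near the atom, more than half the mass. Hence $u\neq0$, so $t=1$.
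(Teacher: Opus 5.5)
Your proposal is correct. The last part of your argument, the Brezis--Lieb splitting of both $\int(\Delta\,\cdot)^2$ and $N$ along $v_i=u_i-u$ followed by strict superadditivity of $t\mapsto t^{\frac{n-2}{n-4}}$ to force $t\in\{0,1\}$, is essentially the paper's Step 2. The paper proves the splitting of $N$ by expanding into three integrals and using weak convergence; you use the $\varepsilon$-form of the Brezis--Lieb lemma, which works equally well.

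Where you genuinely differ is the exclusion of $u=0$. You run Lions' second concentration--compactness lemma:
\begin{itemize}
\item defect measures $\mu,\nu$;
\item the reverse H\"older inequality, which makes $\nu$ purely atomic;
\item the quantities $\mu_\infty,\nu_\infty$ at infinity;
\item convexity, which reduces everything to one concentration point and contradicts the half-mass normalization.
\end{itemize}
The paper instead follows Dietze--Nam. It normalizes the concentration function to a \emph{small} $\delta$, so that at least half of the mass lies outside $B_2$. It then combines the localization with $\chi^2+\eta^2=1$ and the uniform gap $t^{\frac{n-4}{n-2}}+(1-t)^{\frac{n-4}{n-2}}\ge(1-\varepsilon)^{-\frac{n-4}{n-2}}$ for $t\in[\sigma,1-\sigma]$. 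This shows that every annulus in $B_2\setminus B_1$ carries limiting mass at least $\varepsilon$, and $K$ disjoint annuli give a contradiction.

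The paper's route needs no defect measures, no atomic lemma and no separate bookkeeping at infinity. Yours is the textbook template and gives more structural information about how compactness could fail.

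Three small corrections:
\begin{itemize}
\item The Young exponents for $|b|^{\frac4{n-4}}|\eta|^2$ are $\frac n2$ and $\frac n{n-2}$, not $\frac{n-2}2$ and $\frac{n-2}{n-4}$. The bound you state is nonetheless true.
\item $t\mapsto t^{\frac{n-2}{n-4}}$ is convex, not concave, although the inequality you draw from it is right.
\item The convexity dichotomy leaves two alternatives: a single atom of full mass, or all mass escaping to infinity ($\nu=0$, $\nu_\infty=S_{4,2}^{-\frac{n-2}{n-4}}$). The second is excluded only because the supremum is attained at $y=0$, which gives $\nu(\overline{B_1})\ge\limsup_i\int_{B_1}\rho_i=\frac12 S_{4,2}^{-\frac{n-2}{n-4}}>0$. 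You should say this explicitly.
\end{itemize}
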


Our proof follows a simplification of Lions's concentration compactness method that was recently introduced by Dietze and Nam \cite{DietzeNam25}.

\begin{proof}
	\textit{Step 1.} We first show that, after translations and dilations, a subsequence
	converges weakly in $\dot H^2(\mathbb R^n)$ to a non-zero limit.
	
	Multiplying $u_i$ by positive constants tending to $(S_{4,2})^{1/2}$, we may
	assume that
	\begin{equation}
		\int_{\mathbb R^n}|u_i|^{\frac4{n-4}}|\nabla u_i|^2\,dx=1,
		\qquad
		\int_{\mathbb R^n}(\Delta u_i)^2\,dx\longrightarrow S_{4,2}.
		\label{eq:normal}
	\end{equation}
	Let $B_r(a)$ denote the ball of radius $r$ about $a$, and let $B_r=B_r(0)$.
	Choose $\delta>0$ so small that $B_2$ can be covered by fewer than
	$1/(2\delta)$ unit balls. Using translations and dilations, which preserve
	both integrals in \eqref{eq:normal}, we may further assume that
	\begin{equation}
		\int_{B_1}|u_i|^{\frac4{n-4}}|\nabla u_i|^2\,dx
		=\sup_{a\in\mathbb R^n}\int_{B_1(a)}
		|u_i|^{\frac4{n-4}}|\nabla u_i|^2\,dx=\delta.
		\label{eq:balls}
	\end{equation}
	This follows by varying the radius of the balls: the supremum of the
	corresponding integrals is continuous in the radius and increases from $0$ to $1$.
	In particular, since $B_2$ can be covered by fewer than
	$1/(2\delta)$ unit balls, we have
	\begin{equation}
		\int_{\mathbb R^n\setminus B_2}|u_i|^{\frac4{n-4}}
		|\nabla u_i|^2\,dx\ge\frac12.
		\label{eq:outside}
	\end{equation}
	Passing to a subsequence, we have $u_i\rightharpoonup u$ in
	$\dot H^2(\mathbb R^n)$ and $u_i\to u$ in $H^1_{\mathrm{loc}}(\mathbb R^n)$.
	We shall prove that $u\not\equiv0$.
	
	Assume the contrary. Let $\chi,\eta\in C^\infty(\mathbb R^n)$ be nonnegative,
	$\chi^2+\eta^2=1$, with $\chi=1$ on $B_1$ and $\chi=0$ outside $B_2$.
	Since $u_i\to0$ in $H^1_{\mathrm{loc}}(\mathbb R^n)$,
	\begin{equation}
		\int_{\mathbb R^n}(\Delta u_i)^2\,dx
		=\int_{\mathbb R^n}(\Delta(\chi u_i))^2\,dx
		+\int_{\mathbb R^n}(\Delta(\eta u_i))^2\,dx+o(1).
		\label{eq:energy}
	\end{equation}
	Moreover, let us introduce
	\[
	A_i\coloneqq\int_{\mathbb R^n}|\chi u_i|^{\frac4{n-4}}|\nabla(\chi u_i)|^2\,dx,
	\qquad
	B_i\coloneqq\int_{\mathbb R^n}|\eta u_i|^{\frac4{n-4}}|\nabla(\eta u_i)|^2\,dx
	\]
	and
	\[
	A_i'\coloneqq\int_{\mathbb R^n} \chi^{\frac{2(n-2)}{n-4}}| u_i|^{\frac4{n-4}}|\nabla u_i|^2\,dx,
	\qquad
	B_i'\coloneqq\int_{\mathbb R^n} \eta^{\frac{2(n-2)}{n-4}} | u_i|^{\frac4{n-4}}|\nabla u_i|^2\,dx \,.
	\]
	Then
	\begin{equation}
		A_i = A_i' +o(1),
		\qquad
		B_i = B_i' + o(1).
		\label{eq:cutoff}
	\end{equation}
	Indeed, the error $A_i-A_i'$ in \eqref{eq:cutoff}
	is bounded by a constant times
	\[
	\left(\int_{\mathbb R^n}|u_i|^{\frac4{n-4}}|\nabla u_i|^2\,dx\right)^{1/2}
	\left(\int_{B_2}|u_i|^{\frac{2(n-2)}{n-4}}\,dx\right)^{1/2}
	+\int_{B_2}|u_i|^{\frac{2(n-2)}{n-4}}\,dx,
	\]
	which tends to zero by the compact local Sobolev embedding, and similarly for $B_i-B_i'$.

	We have
	\begin{align}\label{eq:lowerbound}
		1+o(1) \ge (A_i')^{\frac{n-4}{n-2}}+(B_i')^{\frac{n-4}{n-2}}.
	\end{align}	
	Indeed, the definition of $S_{4,2}$, \eqref{eq:energy} and \eqref{eq:cutoff} yield
	\begin{align*}
	1+o(1) & = (S_{4,2})^{-1}\int_{\mathbb R^n}(\Delta u_i)^2\,dx +o(1)
	=(S_{4,2})^{-1} \int_{\mathbb R^n}(\Delta(\chi u_i))^2\,dx 
	+(S_{4,2})^{-1} \int_{\mathbb R^n}(\Delta(\eta u_i))^2\,dx +o(1)\nonumber \\
	& \ge A_i^{\frac{n-4}{n-2}}+B_i^{\frac{n-4}{n-2}} + o(1) \ge (A_i')^{\frac{n-4}{n-2}}+(B_i')^{\frac{n-4}{n-2}} + o(1).
	\end{align*}	
		
	Using $\frac{n-4}{n-2}<1$, it is easy to see that for any $\frac12\geq \sigma>0$ there is an $\varepsilon>0$ such that for all $t\in[\sigma,1-\sigma]$ we have
	\[
	t^\frac{n-4}{n-2} + (1-t)^\frac{n-4}{n-2} \geq (1-\varepsilon)^{-\frac{n-4}{n-2}}.
	\]
	We want to apply this with $\sigma = \delta'/(1+\delta')$ where $\delta'=\min\{\delta,1/2\}$ and $t = A_i'/(A_i'+B_i')$. 
       This is admissible since, by \eqref{eq:normal}, \eqref{eq:balls} and \eqref{eq:outside}, we have $1\ge A_i'\ge\delta$ and $1\ge B_i'\ge\frac12$. We deduce that
	\[
	(A_i')^{\frac{n-4}{n-2}}+(B_i')^{\frac{n-4}{n-2}}
	\ge(1-\varepsilon)^{-\frac{n-4}{n-2}} (A_i'+B_i')^{\frac{n-4}{n-2}}.
	\]
	Combining this with \eqref{eq:lowerbound}, we obtain $(1-\varepsilon)(1+o(1)) \geq A_i'+B_i'$ and therefore, recalling \eqref{eq:normal},
	\begin{equation}
		\liminf_{i\to\infty}\int_{\mathbb R^n}
		\left(1-\chi^q-\eta^q\right)
		|u_i|^{\frac4{n-4}}|\nabla u_i|^2\,dx\ge\varepsilon,
		\label{eq:annulus}
	\end{equation}
	where $q\coloneqq2(n-2)/(n-4)$.
	
	We emphasize that the same $\varepsilon$ works for every such pair of cutoff functions.
	Now for any given integer $K$, choose $K$ pairs $(\chi_k,\eta_k)$ satisfying these conditions so that the supports of $1-\chi_k^q-\eta_k^q$ are contained in disjoint annuli
	in $B_2\setminus B_1$ (for example, we can choose $\chi_k = 1$ in $B_{1+(k-1)/K}$ and $\chi_k = 0$ off $B_{1+k/K}$). Also note that $0\leq 1-\chi_k^q-\eta_k^q\leq 1$. Applying \eqref{eq:annulus} to each such pair, summing with respect to $k$ and noting that
	\[
	0\leq \sum_{k=1}^K \left( 1-\chi_k^q-\eta_k^q \right) \begin{cases} \leq 1 & \text{in}\ B_2,\\
		= 0 & \text{in}\ B_1, \end{cases}
	\]
	we deduce from \eqref{eq:annulus} and \eqref{eq:normal}
	\[
	1 \geq \lim_{i\to\infty} \int_{B_2\setminus B_1} |u_i|^{\frac4{n-4}}|\nabla u_i|^2\,dx
	\geq \liminf_{i\to\infty} \sum_{k=1}^K \int_{\mathbb R^n}
	\left(1-\chi_k^q-\eta_k^q\right) |u_i|^{\frac4{n-4}}|\nabla u_i|^2\,dx\ge K\varepsilon.
	\]
	Since this holds for arbitrarily large $K$, we arrive at a contradiction. Thus
	$u\not\equiv0$.

	\medskip
	
    \noindent\textit{Step 2.} We now prove that $u_i\to u$ strongly in
	$\dot H^2(\mathbb R^n)$. 
	
	After passing to a subsequence we may assume,
	by the local compact embedding, that $u_i\to u$ and
	$\nabla u_i\to\nabla u$ almost everywhere. Put $v_i\coloneqq u_i-u$.
	Weak convergence gives
	\begin{equation}
		S_{4,2}=\int_{\mathbb R^n}(\Delta u)^2\,dx
		+\lim_{i\to\infty}\int_{\mathbb R^n}(\Delta v_i)^2\,dx.
		\label{eq:hilbert}
	\end{equation}
	We claim that
	\begin{equation}
		1=\int_{\mathbb R^n}|u|^{\frac4{n-4}}|\nabla u|^2\,dx
		+\lim_{i\to\infty}\int_{\mathbb R^n}
		|v_i|^{\frac4{n-4}}|\nabla v_i|^2\,dx.
		\label{eq:splitting}
	\end{equation}
	
	Before proving this, let us use it to complete the proof. Combining \eqref{eq:hilbert} and \eqref{eq:splitting} with the definition
	of $S_{4,2}$, we obtain
	\begin{align*}
		S_{4,2} &=\int_{\mathbb R^n}(\Delta u)^2\,dx
		+\lim_{i\to\infty}\int_{\mathbb R^n}(\Delta v_i)^2\,dx \\
		&\ge S_{4,2}\left(\int_{\mathbb R^n}|u|^{\frac4{n-4}}
		|\nabla u|^2\,dx\right)^{\frac{n-4}{n-2}}
		+S_{4,2} \limsup_{i\to\infty} \left(\int_{\mathbb R^n}|v_i|^{\frac4{n-4}}
		|\nabla v_i|^2\,dx\right)^{\frac{n-4}{n-2}} \\
		&= S_{4,2}\left(\int_{\mathbb R^n}|u|^{\frac4{n-4}}
		|\nabla u|^2\,dx\right)^{\frac{n-4}{n-2}}
		+S_{4,2}\left(1-\int_{\mathbb R^n}|u|^{\frac4{n-4}}
		|\nabla u|^2\,dx\right)^{\frac{n-4}{n-2}}
		\ge S_{4,2}.
	\end{align*}
	The last inequality comes from the elementary inequality $t^\frac{n-4}{n-2} + (1-t)^\frac{n-4}{n-2} \geq 1$ for all $t\in[0,1]$. Since $u\not\equiv0$, the first integral is positive (apply the Sobolev inequality to $|u|^{\frac{n-2}{n-4}}$). The fact that the elementary inequality is strict for $t\in(0,1)$ implies that $\int_{\mathbb R^n}|u|^{\frac4{n-4}} |\nabla u|^2\,dx= 1$. Hence $\int_{\mathbb R^n}(\Delta u)^2\,dx=S_{4,2}$ and, by \eqref{eq:hilbert}, $\int_{\mathbb R^n}(\Delta v_i)^2\,dx\to0$.
	This is the claimed strong convergence. Undoing the harmless initial
	multiplication by constants finishes the proof.
	
	Thus, it remains to prove \eqref{eq:splitting}. Expanding $\nabla u_i=\nabla u+\nabla v_i$, the difference between
	the left side and the two terms on the right side of
	\eqref{eq:splitting} (before taking the limit) is
	\begin{align}\label{eq:splittingproof}
		&\int_{\mathbb R^n}|u_i|^{\frac4{n-4}}|\nabla u_i|^2\,dx-\int_{\mathbb R^n}|v_i|^{\frac4{n-4}}|\nabla v_i|^2\,dx-\int_{\mathbb R^n}|u|^{\frac4{n-4}}|\nabla u|^2\,dx\notag\\
        &=\int_{\mathbb R^n}\left(|u_i|^{\frac4{n-4}}-|v_i|^{\frac4{n-4}}\right)
		|\nabla v_i|^2\,dx +2\int_{\mathbb R^n}|u_i|^{\frac4{n-4}}
		\nabla v_i\cdot\nabla u\,dx
		+\int_{\mathbb R^n}\left(|u_i|^{\frac4{n-4}}-|u|^{\frac4{n-4}}\right)
		|\nabla u|^2\,dx.
	\end{align}
	We will show that all three integrals tend to zero.
	
	For the proof we will use the fact that  the Sobolev embeddings give uniform bounds for $u_i,v_i$ in $L^{\frac{2n}{n-4}}(\mathbb R^n)$ and for their gradients in
	$L^{\frac{2n}{n-2}}(\mathbb R^n)$. Using these we now show that
	\begin{equation}
		|u_i|^{\frac4{n-4}}-|v_i|^{\frac4{n-4}}
		\longrightarrow |u|^{\frac4{n-4}}
		\qquad\text{in }L^{\frac n2}(\mathbb R^n).
		\label{eq:powers}
	\end{equation}
	The proof uses the same argument that is behind the proof of the Brezis--Lieb lemma \cite[Theorem 1.9]{Lieb-Loss-Analysis}, but we include the details for the sake of completeness.
	
	For $n\ge8$ (when $4/(n-4)\leq 1$), we use $\bigl||a+b|^{\frac4{n-4}}-|b|^{\frac4{n-4}}\bigr|
	\le|a|^{\frac4{n-4}}$, which implies $\bigl||a+b|^{\frac4{n-4}}-|b|^{\frac4{n-4}}-|a|^{\frac4{n-4}}\bigr|
	\le 2|a|^{\frac4{n-4}}$. Applying this with $a=u$ and $b=v_i$ and using dominated convergence, we arrive at \eqref{eq:powers}.
	
	For $4<n<8$, we use instead
	\(
	\bigl||a+b|^{\frac4{n-4}}-|b|^{\frac4{n-4}}\bigr|
	\le C\left(|a|^{\frac4{n-4}}
	+|a|\,|b|^{\frac{8-n}{n-4}}\right), 
	\)
	which implies that, for any $\varepsilon>0$,
	\[
	\bigl||a+b|^{\frac4{n-4}}-|b|^{\frac4{n-4}}-|a|^{\frac4{n-4}}\bigr|^\frac n2
	\le \varepsilon |b|^\frac{2n}{n-4} + C_\varepsilon |a|^\frac{2n}{n-4}.
	\]
	We abbreviate
	\[
	r_i\coloneqq\left||u_i|^{\frac4{n-4}}-|v_i|^{\frac4{n-4}}-|u|^{\frac4{n-4}}\right|^{\frac n2}.
	\]
	Since $r_i\to0$ almost everywhere, the positive part
	$(r_i-\varepsilon|v_i|^{\frac{2n}{n-4}})_+$ tends to zero and is bounded by
	$C_\varepsilon|u|^{\frac{2n}{n-4}}\in L^1(\mathbb R^n)$.
	Dominated convergence therefore gives
	\[
	\limsup_{i\to\infty}\int_{\mathbb R^n}r_i\,dx
	\le\varepsilon\sup_i\int_{\mathbb R^n}|v_i|^{\frac{2n}{n-4}}\,dx.
	\]
	The supremum is finite, and letting $\varepsilon\to0$ proves
	\eqref{eq:powers} also for $4<n<8$.
	
	Next, note that
	\begin{equation}\label{eq:weakconv}
	|\nabla v_i|^2\rightharpoonup0 \  \text{in}\ L^{\frac n{n-2}}(\mathbb R^n),
	\qquad
	|u_i|^{\frac4{n-4}}\nabla v_i \rightharpoonup 0\ \text{in}\ L^{\frac{2n}{n+2}}(\mathbb R^n),
	\qquad
	|u_i|^{\frac4{n-4}}- |u|^{\frac4{n-4}} \rightharpoonup 0\ \text{in}\ L^{\frac n2}(\mathbb R^n).
	\end{equation}
	All three weak convergences follow from the following simple measure-theoretic result \cite[Exercise 4.16]{BrezisFA}: If $1<p<\infty$ and if $(f_i)$ is a bounded sequence in $L^p$ (of some measure space) that converges a.e.~to zero, then it converges weakly in $L^p$ to zero.
	
	Now the fact that all three integrals in \eqref{eq:splittingproof} tend to zero is an immediate consequence of \eqref{eq:powers} and \eqref{eq:weakconv}. This concludes the proof of \eqref{eq:splitting} and therefore of the theorem.
\end{proof}

%%%%%%%%%%%%%%%%%%%%%%%%
%%%%%%%%%%%%%%%%%%%%%%%%

\section{Classification of minimizers}

In this section we determine all minimizers of $E$.
 We first show that every minimizer is, up to translation and a change of sign, a positive smooth function that is radially symmetric, strictly radially decreasing, and superharmonic (i.e., $-\Delta u\ge0$). In a second step we classify all such minimizers explicitly.

%\subsection{Radial symmetry of minimizers}

We begin with the symmetry and regularity properties.

\begin{lem}\label{lem:radial-minimizer}
	Any minimizer of $E$ is, up to translation and a change of sign, a positive smooth function that is radially symmetric, strictly radially decreasing, and superharmonic.
\end{lem}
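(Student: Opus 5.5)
Given a minimizer $u$, I set $f\coloneqq-\Delta u\in L^2(\mathbb R^n)$, so that $u=(-\Delta)^{-1}f$ is given by the Newtonian potential. I then compare $u$ with $w\coloneqq(-\Delta)^{-1}f^*$, where $f^*$ is the symmetric decreasing rearrangement of $|f|$. Since $\|f^*\|_2=\|f\|_2$, the numerator of $E$ does not increase: $\int(\Delta w)^2\,dx=\int(\Delta u)^2\,dx$.

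For the denominator, I write $\int|u|^{\frac4{n-4}}|\nabla u|^2\,dx=c_n\int|\nabla |u|^{\frac{n-2}{n-4}}|^2\,dx$ with $c_n=\left(\frac{n-4}{n-2}\right)^2$. Talenti's comparison principle gives $|u|^*\le w$ pointwise, and in gradient form $\int_{\{|u|>t\}}|\nabla u|^2\le\int_{\{w>t\}}|\nabla w|^2$ for each level $t$. The latter is the version of Talenti's theorem for $-\Delta u=f$ on $\mathbb R^n$ via level-set integrals; Talenti proves $\int_{\{|u|>t\}}|\nabla u|\le$ the corresponding quantity and bounds the Dirichlet energy on superlevel sets. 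Integrating over levels with the weight $t^{\frac4{n-4}}$ via the coarea formula, or using $|u|^*\le w$ together with Pólya--Szegő for $|u|^{\frac{n-2}{n-4}}$, I obtain
\[
\int|u|^{\frac4{n-4}}|\nabla u|^2\,dx\le\int w^{\frac4{n-4}}|\nabla w|^2\,dx .
\]
The cleanest route is this: by Pólya--Szegő, $\int|\nabla(|u|^{\frac{n-2}{n-4}})^*|^2\le\int|\nabla |u|^{\frac{n-2}{n-4}}|^2$, which goes the wrong way. So I instead use Talenti's pointwise statement $\frac{d}{dt}\left(-\int_{\{|u|>t\}}|\nabla u|^2\right)\le\frac{d}{dt}\left(-\int_{\{w>t\}}|\nabla w|^2\right)$ in integrated form, combined with $|u|^*\le w$ and a layer-cake argument. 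This step is the main obstacle: I must check that Talenti's estimate controls the weighted Dirichlet integral and that equality forces $|f|=f^*$ up to translation, which in turn forces $u=\pm w$ up to translation.

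Granting the inequality, $w$ is also a minimizer, and by minimality equality holds throughout. I expect the equality case in Talenti's theorem, in the form of Alvino--Lions--Trombetti, to give that $f$ has a sign and $|f|=f^*(\cdot-x_0)$. Hence, up to sign and translation, $u=(-\Delta)^{-1}f^*$ with $f^*\ge0$ radially decreasing. Then $u>0$ (the Newtonian potential of a nonnegative, nonzero function is positive), $u$ is radial, and $-\Delta u=f^*\ge0$, so $u$ is superharmonic. For strict radial decrease, I write $u'(r)=-r^{1-n}\int_0^r f^*(s)s^{n-1}\,ds<0$ for $r>0$, which holds because $f^*$ is positive near the origin.

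For smoothness, I would use the Euler--Lagrange equation. With $p=\frac4{n-4}$, it reads $\Delta^2u=\lambda\left(-\operatorname{div}(u^p\nabla u)+\frac p2u^{p-1}|\nabla u|^2\right)$, and for $u>0$ this simplifies to $\Delta^2u=-\lambda\left(u^p\Delta u+\frac p2u^{p-1}|\nabla u|^2\right)$. Regularity then follows by a standard bootstrap. Since $u$ is positive, radial, and decreasing, it is locally bounded below away from infinity and bounded near the origin by ODE/potential estimates, so the nonlinearity is smooth. Starting from $u\in\dot H^2$, iterated Calderón--Zygmund/Brezis--Kato type estimates raise integrability, and then Schauder estimates give $C^\infty$.
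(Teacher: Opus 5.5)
\textbf{The inequality step is fine and closable along your own line.} You leave $\int|u|^{\frac4{n-4}}|\nabla u|^2\,dx\le\int w^{\frac4{n-4}}|\nabla w|^2\,dx$ as ``the main obstacle'', but it follows from your level-set route. Test $-\Delta u=f$ with $(|u|-t)_+\sgn u$ and use Hardy--Littlewood together with $u^*\le w$:
\[
\int_{\{|u|>t\}}|\nabla u|^2\,dx=\int f\,(|u|-t)_+\sgn u\,dx\le\int f^*(u^*-t)_+\,dx\le\int f^*(w-t)_+\,dx=\int_{\{w>t\}}|\nabla w|^2\,dx .
\]
Then the layer-cake formula $|u|^{\frac4{n-4}}=\int_0^{|u|}\frac4{n-4}t^{\frac{8-n}{n-4}}\,dt$ and Fubini give the inequality. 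Integrated in $t$, this is exactly the identity the paper starts from, $\frac n{n-4}\int|u|^{\frac4{n-4}}|\nabla u|^2\,dx=\int(-\Delta u)\,u|u|^{\frac4{n-4}}\,dx$. That identity turns the weighted term into a pairing of $-\Delta u$ with $|u|^{\frac n{n-4}}$, so Hardy--Littlewood and Talenti's bound $u^*\le w$ apply at once.

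\textbf{The genuine gap is the equality case.} This is the actual content of the lemma, and ``I expect Alvino--Lions--Trombetti to give\dots'' is not an argument. Rigidity results of that kind start from pointwise information such as $u^*\equiv w$, whereas minimality only gives you equality of an integral. Moreover, your $w$ depends only on $|f|$, so the comparison is blind to the sign of $f$. Equality in $\int f(|u|-t)_+\sgn u\le\int|f|(|u|-t)_+$ only says that $f$ and $u$ have the same sign where both are nonzero, and this does not rule out sign changes.

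What is missing is a way to extract pointwise information, which the paper does as follows. Equality in the chain gives $\int f^*\bigl(w^{\frac n{n-4}}-(u^*)^{\frac n{n-4}}\bigr)\,dx=0$ with a nonnegative integrand. Since $f^*>0$ on some ball $B_{r_0}$, we get $w=u^*$ there, hence $\max|u|=w(0)$. At a maximum point $x_0$ of $|u|$,
\[
|u(x_0)|\le C_n\int|x_0-y|^{2-n}|f(y)|\,dy\le C_n\int|y|^{2-n}f^*(y)\,dy=w(0),
\]
so both inequalities are equalities. The first forces $f$ to have a constant sign; the paper instead obtains the sign from a separate comparison with $(-\Delta)^{-1}|f|$. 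The second forces $|f|(\cdot+x_0)=f^*$, by the equality case of Hardy--Littlewood with the strictly decreasing kernel $|y|^{2-n}$.

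This argument needs $u$ and $w$ continuous with maxima attained, so regularity must be established independently of symmetry. Your bootstrap presupposes radial symmetry. It also asserts boundedness near the origin ``by ODE/potential estimates'', which does not follow from $f^*\in L^2$: for $n\ge5$, the Newtonian potential of the radially decreasing $L^2$ function $|x|^{-n/2}|\log|x||^{-1}\mathbf 1_{B_{1/2}}$ is infinite at $0$. Boundedness must therefore come from an $\varepsilon$-regularity/Brezis--Kato argument for the critical equation. The paper avoids this by transferring to $\mathbb S^n$ and invoking the regularity theorem of Ge--Wang--Wei.
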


\begin{proof}
We split the argument into three steps.

\medskip

\noindent\emph{Step 1: positivity and superharmonicity.}
Let $u\in\dot H^2(\mathbb R^n)$. Let $v\in\dot H^2(\mathbb R^n)$ be the unique solution of
\[
-\Delta v=|\Delta u|\qquad\text{in }\mathbb R^n.
\]
Since $n\ge5$, the Newton kernel is positive, hence $v>0$ and $|u|\le v$ a.e.

Integration by parts yields
\begin{align}
\frac{n}{n-4}\int_{\mathbb R^n}|u|^{\frac4{n-4}}|\nabla u|^2\,dx
&=\int_{\mathbb R^n}(-\Delta u)\,u|u|^{\frac4{n-4}}\,dx\nonumber\\
&\le \int_{\mathbb R^n}|\Delta u|\,|u|^{\frac n{n-4}}\,dx\nonumber\\
&\le \int_{\mathbb R^n}(-\Delta v)\,v^{\frac n{n-4}}\,dx
=\frac{n}{n-4}\int_{\mathbb R^n}v^{\frac4{n-4}}|\nabla v|^2\,dx.
\end{align}
Moreover,
\[
\int_{\mathbb R^n}(\Delta v)^2\,dx=\int_{\mathbb R^n}|\Delta u|^2\,dx=\int_{\mathbb R^n}(\Delta u)^2\,dx.
\]
Consequently, $E(v)\le E(u)$.

Now assume that $u$ is a minimizer. Then $E(v)=E(u)=S_{4,2}$. We claim that $\Delta u$ has constant sign.
Set $f\coloneqq\Delta u$ and $f_{\pm}\coloneqq\max\{\pm f,0\}$. Then
\[
-\Delta(v-u)=2f_+,
\qquad
-\Delta(v+u)=2f_-.
\]
If both $f_+$ and $f_-$ were nontrivial, positivity of the Newton kernel would imply $v-u>0$ and $v+u>0$ a.e., hence $v>|u|$ a.e. Since $|\Delta u|\not\equiv0$, we would have
\[
\int_{\mathbb R^n}|\Delta u|\,|u|^{\frac n{n-4}}\,dx
<\int_{\mathbb R^n}|\Delta u|\,v^{\frac n{n-4}}\,dx
=\int_{\mathbb R^n}(-\Delta v)\,v^{\frac n{n-4}}\,dx,
\]
which forces $E(v)<E(u)$, a contradiction. Thus $\Delta u$ does not change sign.
	If $\Delta u\le 0$, then positivity of the Newton kernel implies $u>0$ almost everywhere; similarly, if $\Delta u\ge 0$, then $-u>0$ almost everywhere. This proves that every minimizer has constant sign.
	
	\medskip
	
	\noindent\emph{Step 2.} We now discuss radial symmetry and monotonicity.
	
	For a measurable function $w$ on $\mathbb{R}^n$ such $|\{ |w|>\lambda\}|<\infty$ for all $\lambda>0$, we denote by $w^*$ its symmetric decreasing rearrangement; see \cite[Section 3]{Lieb-Loss-Analysis}.
	
	We first assume that $u\in\dot H^2(\R^n)$ is arbitrary. Consider the unique solution $w\in\dot H^2(\R^n)$ of
	\[
	-\Delta w=(-\Delta u)^*\qquad\text{in }\mathbb{R}^n.
	\]
	Then $w$ is radial and positive by the positivity of the Newton kernel. Talenti's comparison theorem~\cite{Telenti78} or \cite{Burchard} yields
	\eq{\label{Talenti_eq}
		0\leq u^*\le w.
	}
	Moreover, rearrangement preserves the $L^2$ norm \cite[Section 3.3]{Lieb-Loss-Analysis}, and hence
	\[
	\int_{\mathbb R^n}(-\triangle w)^{2}\,dx
	=\int_{\mathbb R^n}(( -\triangle u)^{*})^{2}\,dx
	=\int_{\mathbb R^n}(-\Delta u)^{2}\,dx.
	\]
	
	By the same integration by parts as in Step 1 and by the Hardy--Littlewood rearrangement inequality \cite[Theorem 3.4]{Lieb-Loss-Analysis},
	\[
	\frac{n}{n-4}\int_{\mathbb{R}^n}|u|^{\frac{4}{n-4}}|\nabla u|^2\,dx
	= \int_{\mathbb R^n} (-\triangle u) u|u|^{\frac{4}{n-4}}\,dx
	\le \int_{\mathbb R^n} |-\triangle u| |u|^{\frac{n}{n-4}}\,dx
	\le \int_{\mathbb R^n} (-\triangle u)^{*} \bigl(|u|^{\frac{n}{n-4}}\bigr)^{*}\,dx.
	\]
	Since $\bigl(|u|^{\frac{n}{n-4}}\bigr)^{*}=(u^*)^{\frac{n}{n-4}}$, we obtain from \eqref{Talenti_eq} that
	\eq{\label{talenti increase energy}
		\int_{\mathbb R^n} (-\triangle u)^{*} \bigl(|u|^{\frac{n}{n-4}}\bigr)^{*}\,dx
		\le \int_{\mathbb R^n} (-\triangle w) w^{\frac{n}{n-4}}\,dx
		= \frac{n}{n-4}\int_{\mathbb{R}^n}w^{\frac{4}{n-4}}|\nabla w|^2\,dx.
	}
	Combining these inequalities, we conclude that $E(w)\le E(u)$. 
	
	Now assume that $u$ minimizes $E$. According to Step 1, after a change of sign, we may assume that $u>0$ and $-\Delta u\geq 0$. The above argument shows that $E(w)=E(u)=S_{4,2}$, hence $w$ is a radial minimizer, with $w>0$ and $-\Delta w\ge0$. In particular,
	\begin{equation}\label{equality}
		\int_{\mathbb R^n} w^{\frac{n}{n-4}}(-\triangle w)\,dx=\int_{\mathbb R^n} (u^*)^{\frac{n}{n-4}}(-\triangle u)^*\,dx.
	\end{equation}
	Elliptic regularity, discussed further in Step 3, yields that $u$ and $w$ are smooth. In what follows, we only need that $u$ and $w$ are continuous and vanish at infinity. These properties imply, in particular, that $\max_{\R^n} u$ and $\max_{\R^n} w$ are attained and finite. The latter is attained at the origin.
	
	By \eqref{Talenti_eq}, we have $w^{\frac{n}{n-4}}(-\triangle w)\ge (u^*)^{\frac{n}{n-4}}(-\triangle u)^*$ on $\R^n$. Since $(-\triangle u)^{*}$ is decreasing and $-\triangle u\not\equiv0$, there is an $r_0>0$ such that $(-\triangle u)^{*}>0$ in $B_{r_0}$. This, together with \eqref{equality}, implies that $w=u^*$ on $B_{r_0}$. In particular, we have
	\begin{equation}\label{eq:equalmax}
	\max_{\R^n} w = w(0)=u^*(0)=\max_{\R^n} u.
	\end{equation}
	Let $x_0\in\mathbb R^n$ such that $\max_{\R^n} u=u(x_0)$. By the Newton representation, the monotonicity of $y\mapsto |y|^{2-n}$ and the Hardy--Littlewood inequality \cite[Theorem 3.4]{Lieb-Loss-Analysis} we find
	\begin{align*}
		\max_{\R^n} u=u(x_0)
		&=C_n\int_{\mathbb R^n}|x_0-y|^{2-n}(-\Delta u)(y)\,dy
		=C_n\int_{\mathbb R^n}|y|^{2-n}(-\Delta u)(y+x_0)\,dy\\
		&\le C_n\int_{\mathbb R^n}|y|^{2-n}(-\Delta u)^*(y)\,dy
		=w(0)=\max_{\R^n} w.
	\end{align*}
	By \eqref{eq:equalmax}, the inequality here is an equality. Since $y\mapsto |y|^{2-n}$ is strictly symmetric-decreasing, \cite[Theorem~3.4]{Lieb-Loss-Analysis} implies
	$-\Delta u(y+x_0)=(-\Delta u)^*(y)$. In view of
	\[
	u(x) = C_n\int_{\mathbb R^n}|y|^{2-n}(-\Delta u)(y+x)\,dy = C_n\int_{\mathbb R^n}|y|^{2-n}(-\Delta u)^*(y+x-x_0)\,dy = w^*(x-x_0),
	\]
	the function $u$ is radially symmetric with respect to $x_0$.
	
	Moreover,
	$r^{n-1}w'(r)=\int_0^{r}(\Delta w)s^{n-1}ds=-\int_0^{r}(-\Delta u)^*s^{n-1}ds<0$
	for $r>0$, since $-\Delta u\not\equiv0$. Hence $w$ (and therefore $u$) is strictly radially decreasing.
	
	\medskip
	
	\noindent\emph{Step 3.} Finally, let us discuss the regularity of minimizers.
	
	Let $\phi:\mathbb R^n\cup\{\infty\}\to\mathbb S^n$ be the stereographic projection and let $N\coloneqq\phi(\infty)$. For a Euclidean minimizer $u$, define
	\[
	\widetilde u(\phi(x))\coloneqq\left(\frac{1+|x|^2}{2}\right)^{\frac{n-4}{2}}u(x).
	\]
	By conformal covariance, $\widetilde u\in H^2(\mathbb S^n)$ and $\widetilde u$ is a minimizer of the corresponding spherical functional, and satisfies the corresponding Euler--Lagrange equation. The regularity theorem for minimizers in \cite[Appendix~A]{GeWangWei} then implies $\widetilde u\in C^\infty(\mathbb S^n)$. Since
	\[
	u(x)=\left(\frac{2}{1+|x|^2}\right)^{\frac{n-4}{2}}\widetilde u(\phi(x)),
	\]
	we obtain that $u\in C^\infty(\R^n)$ together with the decay estimates
	\[
	|\nabla^ju(x)|=O(|x|^{4-j-n})\,\,\,\text{for}\,\,j=0,1,2,\quad \text{as}\,\,|x|\to\infty.
	\]
	This concludes the proof.
\end{proof}

\begin{remark}
	The argument at the beginning of Step 2 implies that
	\[
	S_{4,2} = \inf_{u\in \dot H^2(\mathbb{R}^n)\setminus\{0\}\, \mathrm{radial}} E(u) \,.
	\]
	This observation can be used to give an alternative, elementary proof of the existence of an optimizer. This is most easily carried out in logarithmic variables, as in Section \ref{sec:classificationode}, where it becomes clear that the functional is subcritical. We emphasize, however, that our argument in Section \ref{sec:existence} gives more than just the existence of an optimizer: it gives the relative compactness (up to symmetries) of optimizing sequences. The latter, stronger result is crucial in many applications. 
\end{remark}

%%%%%%%%%%%%%%%%%%%%%%%%%%%%%%%%

%\subsection{Classification of minimizers}

We now classify all positive minimizers.

\begin{thm}\label{Thm A}
Any positive minimizer of $E$ is of the form
\[
 u(x)=\left(\frac{a}{d+|x-\bar{x}|^{2}}\right)^{\frac{n-4}{2}},
\]
for some $a>0$, $d>0$, and $\bar{x}\in\mathbb{R}^n$.
\end{thm}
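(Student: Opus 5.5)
Our approach is to pass from the fourth-order Euler--Lagrange equation to a second-order system, and then use the method of moving spheres (Kelvin transforms) in the spirit of Chen--C.~Li--Ou, closed off by the calculus lemma of Y.~Li and Zhu.

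\textbf{Step 1: the Euler--Lagrange system.} Let $u$ be a positive minimizer. By Lemma~\ref{lem:radial-minimizer}, $u$ is smooth, radial about some point, superharmonic, and decays like $|x|^{4-n}$ with the corresponding decay of its derivatives. Normalize $\int|u|^{4/(n-4)}|\nabla u|^2=1$. The first variation of $\int(\Delta u)^2-\lambda\int u^{4/(n-4)}|\nabla u|^2$ gives
\[
\Delta^2 u=-\mu\Bigl(\tfrac{2}{n-4}u^{\frac{8-n}{n-4}}|\nabla u|^2+2\,\mathrm{div}\bigl(u^{\frac4{n-4}}\nabla u\bigr)\Bigr)\cdot\tfrac12
\]
for some $\mu>0$. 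After simplification this reads $\Delta^2u=-c_1\,u^{\frac4{n-4}}\Delta u-c_2\,u^{\frac{8-n}{n-4}}|\nabla u|^2$, with constants $c_1,c_2>0$ depending on $n$ and $\mu$. We set $v\coloneqq-\Delta u\ge0$, so that $-\Delta v=c_1u^{\frac4{n-4}}v-c_2u^{\frac{8-n}{n-4}}|\nabla u|^2$.

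The key structural fact is conformal covariance. The functional is invariant under the Kelvin transform
\[
u_{x,\lambda}(y)=\Bigl(\tfrac{\lambda}{|y-x|}\Bigr)^{n-4}u\Bigl(x+\tfrac{\lambda^2(y-x)}{|y-x|^2}\Bigr).
\]
This invariance comes from the spherical picture, because $Q$ and $R$ transform covariantly. Consequently $u_{x,\lambda}$ solves the same equation on $\mathbb R^n\setminus\{x\}$.

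\textbf{Step 2: moving spheres.} For each center $x\in\mathbb R^n$, define $\bar\lambda(x)=\sup\{\mu>0:\ u_{x,\lambda}\le u\text{ in }|y-x|\ge\lambda\ \text{for all }0<\lambda<\mu\}$. Small-$\lambda$ initiation is standard from smoothness, positivity, and the expansion of $u$ near $x$. We run the comparison on the pair $(u,-\Delta u)$: set $w_\lambda=u-u_{x,\lambda}$ and $z_\lambda=-\Delta w_\lambda$. Continuing the sphere then requires a maximum principle for the coupled system on $\{|y-x|\ge\lambda\}$.

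The obstacle is the gradient term $u^{\frac{8-n}{n-4}}|\nabla u|^2$, which destroys the cooperative structure. Our first attempt is to rewrite the nonlinearity in divergence form, as $-\Delta v=c\,\mathrm{div}(u^{4/(n-4)}\nabla u)+c'u^{\frac{8-n}{n-4}}|\nabla u|^2$. Because $u$ is radial, we can alternatively reduce everything to an ODE in $t=\log r$. There we would compare $u$ with its reflection $u_{x,\lambda}$ along the single variable, which converts the gradient term into a function of $(u,u')$. Comparison of the corresponding reflected ODE solutions would then be carried out by a sign argument on $w_\lambda$ and $w_\lambda'$, using $u'<0$.

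We then show $\bar\lambda(x)<\infty$ for every $x$. Since $|y|^{n-4}u(y)\to\ell>0$, the standard alternative of Li--Zhu yields two possibilities. Either $\bar\lambda(x)=\infty$ for all $x$, which forces $u$ to be constant, impossible. Or $\bar\lambda(x)<\infty$ for all $x$, with $u_{x,\bar\lambda(x)}\equiv u$.

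\textbf{Step 3: conclusion.} By the lemma of Li--Zhu, a positive function satisfying $u_{x,\bar\lambda(x)}\equiv u$ for every $x$ must have the form $u(y)=(a/(d+|y-\bar x|^2))^{\frac{n-4}{2}}$. The parameters $a,d>0$ are then fixed by the equation, up to scaling.

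The main obstacle is the maximum principle for the comparison in Step 2 in the presence of the non-cooperative gradient term. If the PDE route fails, we fall back on the ODE route. In $t=\log r$ with $u=r^{-(n-4)/2}\phi(t)$, the equation becomes autonomous and has a conserved energy. We would then show that $\phi$ must be even about its maximum, and that the bounded positive homoclinic orbit is unique, namely $\phi=c\,\mathrm{sech}^{(n-4)/2}(t-t_0)$.
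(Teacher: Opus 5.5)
\textbf{There is a genuine gap in Step 2.} Your whole argument hinges on a comparison principle for $w_\lambda=u-u_{x,\lambda}$ on $\{|y-x|\ge\lambda\}$. You need it to move the sphere, and again at $\lambda=\bar\lambda(x)$ (strong maximum principle and Hopf lemma) to conclude $u_{x,\bar\lambda(x)}\equiv u$. So both branches of the Li--Zhu alternative you quote rest on it.

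You correctly observe that this principle is not available here, but you do not resolve it. The right-hand side $\Lambda u^{\frac2{n-4}}(-\Delta u^{\frac{n-2}{n-4}})$ contains a second-order operator acting on $u$. Hence the linear equation for $w_\lambda$ does not reduce to a cooperative second-order system, and the sign of $\Delta^2 u$ is not known a priori.

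The ODE workaround cannot fill this. $u$ is radial about one point only, so for any other center $x$ the reflection $u_{x,\lambda}$ is not a function of $\log|y-x|$ alone. Lemma~\ref{lem:Li-Zhu Duke 1995}, however, needs the Kelvin identity at every center.

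Your final fallback, uniqueness of the positive homoclinic in $t=\log r$, is asserted rather than argued. A conserved Hamiltonian is one scalar constraint in a four-dimensional phase space and does not by itself force $\phi=c\,\sech^{\frac{n-4}2}(t-t_0)$. In the paper this takes all of Section~\ref{sec:classificationode}: perfect-square identities for $T_\pm$, positivity of $\eta$, a trapping lemma, ordering of homoclinics, and a Picone-type identity.

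You also use $\lim_{|y|\to\infty}|y|^{n-4}u(y)>0$ without proof. The regularity step only gives existence and finiteness of this limit.

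\textbf{The missing idea is to use minimality, not just the Euler--Lagrange equation.} $E$ is M\"obius invariant, so every conformal image of $u$ is again a minimizer. By Lemma~\ref{lem:radial-minimizer}, every minimizer is radial and strictly decreasing about some point. This gives the Kelvin invariance with no maximum principle at all:
\begin{itemize}
\item Set $\alpha=\lim|x|^{n-4}u(x)$ and $\lambda(a)=(\alpha/u(a))^{\frac1{n-4}}$, and consider the minimizer $v(x)=\lambda(a)^{\frac{n-4}2}|x|^{4-n}u\bigl(\lambda(a)(x/|x|^2-e)+a\bigr)$.
\item It satisfies $\lim_{x\to0}v(x)=v(e)$, so its center lies on the hyperplane bisecting $0$ and $e$.
\item The equality $v((\frac12-h)e)=v((\frac12+h)e)$ is then exactly $u=u_{a,\lambda(a)}$ along the ray from $a$ in direction $e$. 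Since $e$ and $a$ are arbitrary, Li--Zhu applies directly.
\end{itemize}
The same device gives $\alpha>0$. If $\alpha=0$, the Kelvin transform of $u$ about the origin would be a minimizer tending to $0$ at the origin, which is impossible for a positive, radially decreasing function.
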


By Lemma~\ref{lem:radial-minimizer}, every minimizer is radially symmetric after a translation and hence satisfies a fourth-order radial ODE. This is a possible route to proving Theorem \ref{Thm A}, which we will pursue in Section \ref{sec:classificationode}. Here we rather  follow an approach inspired by Chen--C.~Li--Ou~\cite{ChenLiOu}, based on Kelvin transforms and a calculus lemma of Y.~Li--Zhu~\cite{LiZhu}.

Let $u$ be a positive minimizer of $E$. Then $u$ satisfies the Euler--Lagrange equation
\begin{equation}\label{eq:el}
\Delta^{2}u=\Lambda \, u^{\frac{2}{n-4}}\bigl(-\Delta u^{\frac{n-2}{n-4}}\bigr)\quad\text{in }\mathbb{R}^{n}
\end{equation}
for some constant $\Lambda>0$. Moreover, by Step 3 in the proof of Lemma \ref{lem:radial-minimizer} the limit
\eq{\label{alpha}
\alpha\coloneqq\lim_{|x|\to\infty}u(x)|x|^{n-4}
}
exists and is finite.

We claim that $\alpha>0$. Indeed, if $\alpha=0$, then the Kelvin transform
\[
 w(x)\coloneqq\frac{1}{|x|^{n-4}}u\!\left(\frac{x}{|x|^{2}}\right)
\]
satisfies $\lim_{x\to0}w(x)=0$ and $\lim_{|x|\to\infty}w(x)=0$. By conformal invariance of $E$, the function $w$ is also a minimizer. By Lemma~\ref{lem:radial-minimizer} it is (after translation) radial and strictly decreasing, and it satisfies $-\Delta w\ge0$. This contradicts $\lim_{x\to0}w(x)=0$.

We now prove the following lemma, inspired by Chen--C.~Li--Ou~\cite{ChenLiOu}, which encodes a Kelvin-type invariance property of minimizers.
\begin{lem}\label{lem:Kelvin keeps}
For any $a\in\mathbb{R}^{n}$ there exists $\lambda(a)>0$ such that $\lambda(a)^{n-4}u(a)=\alpha$ and
\[
 u(\lambda(a)x+a)=\frac{1}{|x|^{n-4}}u\!\left(\frac{\lambda(a)x}{|x|^{2}}+a\right)
 \qquad\text{for all }x\in\mathbb{R}^{n}\setminus\{0\}.
\]
Equivalently,
\[
 u(y)=\left(\frac{\lambda(a)}{|y-a|}\right)^{n-4}u\!\left(a+\frac{\lambda(a)^{2}(y-a)}{|y-a|^{2}}\right)
 \qquad\text{for all }y\in\mathbb{R}^{n}\setminus\{a\}.
\]
\end{lem}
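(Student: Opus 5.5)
Fix $a\in\mathbb{R}^n$ and set $\lambda=\lambda(a)\coloneqq(\alpha/u(a))^{1/(n-4)}$, which is well defined and positive because $u(a)>0$ and $\alpha>0$. Put $u_a(x)\coloneqq u(\lambda x+a)$. Let $w_a$ be the Kelvin transform of $u_a$,
\[
w_a(x)\coloneqq\frac{1}{|x|^{n-4}}u_a\!\left(\frac{x}{|x|^2}\right)=\frac{1}{|x|^{n-4}}u\!\left(\frac{\lambda x}{|x|^2}+a\right).
\]
The claim is exactly $u_a=w_a$ on $\mathbb{R}^n\setminus\{0\}$, and the second formulation follows from the first by the substitution $y=\lambda x+a$.

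The plan is to use the rigidity from Lemma~\ref{lem:radial-minimizer}. Translations and dilations preserve $E$, so $u_a$ is a minimizer. The Kelvin transform also preserves $E$, by the conformal invariance already used above to show $\alpha>0$, so $w_a$ is a minimizer as well, after extending it across the origin. For this extension I would note that
\[
\lim_{x\to0}w_a(x)=\lim_{|z|\to\infty}|z|^{n-4}u(\lambda z+a)=\lambda^{4-n}\alpha=u(a)=u_a(0),
\]
by \eqref{alpha} and the choice of $\lambda$. So $w_a$ extends continuously to $0$ with $w_a(0)=u_a(0)$. Moreover, $w_a(x)|x|^{n-4}\to u_a(0)$ while $u_a(x)|x|^{n-4}\to\lambda^{4-n}\alpha=u_a(0)$ as $|x|\to\infty$, so the two functions also share the same asymptotic constant at infinity.

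By Lemma~\ref{lem:radial-minimizer}, $u_a$ and $w_a$ are each radially symmetric and strictly decreasing about some centers $p$ and $q$. The key step is to show that this information forces $u_a\equiv w_a$. Write $u_a(x)=U(|x-p|)$. Its Kelvin transform is then radially symmetric about the point $p^*=p/|p|^2$ when $p\neq0$, because Kelvin transforms map minimizers radial about one point to minimizers radial about the inverted point. One could try to compare centers directly. Instead, I would use the classical Chen--Li--Ou argument, the method of moving spheres. Define $u_{a,\mu}(x)\coloneqq(\mu/|x|)^{n-4}u_a(\mu x/|x|^2)$ and let $\bar\mu\coloneqq\sup\{\mu: u_{a,\nu}\le u_a \text{ in } |x|\ge\nu \text{ for all } \nu<\mu\}$. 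The spheres can be started because $u_a$ is smooth and positive near $0$ and has exact decay $\sim|x|^{4-n}$. One then shows $u_{a,\bar\mu}\equiv u_a$ using the equation \eqref{eq:el} written as a system in $u$ and $-\Delta u\ge0$, or via the integral representation. Finally, the equality of the asymptotic constants pins down $\bar\mu=1$, that is, $\lambda(a)$ is the stated one.

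The main obstacle is the moving-spheres step for the nonlocal, quasilinear right-hand side $u^{2/(n-4)}(-\Delta u^{(n-2)/(n-4)})$, which is not monotone in $u$ in an obvious way. To handle it, I would try to exploit the minimality of $u$ instead of a maximum principle. Suppose $u_a\neq w_a$. Both are minimizers that agree at $0$ and at infinity to leading order. Then $\max(u_a,w_a)$ and $\min(u_a,w_a)$, or better, the functions obtained by applying Step~1 of Lemma~\ref{lem:radial-minimizer} to them, should again give minimizers. The strict-inequality argument there, via positivity of the Newton kernel, would then force one of $u_a-w_a$, $w_a-u_a$ to vanish identically, unless $u_a\equiv w_a$. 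If this splicing fails to preserve $\dot H^2$-energy, I will fall back to the symmetry-center comparison: one checks that the center of $w_a$ is the inversion of the center $p$ of $u_a$ (that is, of $x_0-a$ rescaled). Equality of the values at $0$, of the asymptotic constants, and of the maxima then forces the profiles to coincide.
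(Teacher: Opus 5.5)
Your reduction to $u_a\equiv w_a$ and your computation $\lim_{x\to0}w_a(x)=u(a)=u_a(0)$ are correct, but the step that proves the lemma is missing. You know that $u_a$ and $w_a$ are each radial and strictly decreasing about some unknown centers, that $w_a(0)=u_a(0)$, and that their decay constants agree. That gives no relation between the two functions away from $0$ and $\infty$, so it cannot force $u_a\equiv w_a$. None of your three routes closes this.
\begin{enumerate}
\item Moving spheres needs a comparison principle for $\Delta^2u=\Lambda u^{\frac{2}{n-4}}(-\Delta u^{\frac{n-2}{n-4}})$. You flag this as the main obstacle and leave it open.
\item The splicing idea fails because $\max(u_a,w_a)$ and $\min(u_a,w_a)$ are in general not in $\dot H^2(\mathbb{R}^n)$; they have kinks where the graphs cross. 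Also, Step~1 of Lemma~\ref{lem:radial-minimizer} replaces a single function by the Newton potential of $|\Delta u|$ and says nothing about lattice operations.
\item The fallback rests on a false claim. The Kelvin transform of a function radial about $p\neq0$ is not radial about $p/|p|^2$: already for $u(x)=(1+|x-p|^2)^{-\frac{n-4}{2}}$ the Kelvin transform is a bubble centered at $p/(1+|p|^2)$, and in general the image center depends on the profile. Moreover, equal values at $0$, equal decay constants and equal maxima do not make two radial profiles coincide.
\end{enumerate}

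The missing idea is to apply Lemma~\ref{lem:radial-minimizer} to a M\"obius image in which the two distinguished points, $a$ and $\infty$, become two \emph{distinct finite} points carrying the same value. For a unit vector $e$ let $v(x)=\lambda^{\frac{n-4}{2}}|x|^{4-n}u\bigl(\lambda(x/|x|^2-e)+a\bigr)$. This $v$ is a minimizer, with $\lim_{x\to0}v(x)=\alpha\lambda^{-\frac{n-4}{2}}$ and $v(e)=\lambda^{\frac{n-4}{2}}u(a)$. These agree precisely because $\lambda^{n-4}u(a)=\alpha$. Since $v$ is radial and strictly decreasing about its center, $v(0)=v(e)$ forces the center onto the hyperplane $\{x\cdot e=\tfrac12\}$, so $v$ is even under reflection in that hyperplane. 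Compare $v((\tfrac12-h)e)$ with $v((\tfrac12+h)e)$ and set $t=\frac{1/2+h}{1/2-h}$. This gives $u(\lambda te+a)=t^{4-n}u(\lambda t^{-1}e+a)$ for all $t>0$, and letting $e$ vary gives the lemma. Geometrically, the map $x\mapsto\lambda(x/|x|^2-e)+a$ conjugates the reflection in $\{x\cdot e=\tfrac12\}$ into inversion in $\partial B_\lambda(a)$. This is the Chen--Li--Ou device the paper uses: the rearrangement-based symmetry of \emph{all} minimizers replaces any moving-plane or moving-sphere comparison, so the quasilinear nonlinearity never has to be handled.
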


\begin{proof}
Let $a\in\mathbb{R}^n$. Set
\[
\lambda(a)\coloneqq\left(\frac{\alpha}{u(a)}\right)^{\!\frac{1}{n-4}},
\]
where $\alpha$ is given by \eqref{alpha}. For any unit vector $e\in\mathbb{S}^{n-1}$ define
\[
 v(x)\coloneqq\frac{\lambda(a)^{\frac{n-4}{2}}}{|x|^{n-4}}u\!\left(\lambda(a)\left(\frac{x}{|x|^{2}}-e\right)+a\right).
\]
By conformal invariance, $v$ is also a minimizer. By Lemma~\ref{lem:radial-minimizer}, the function $v$ is radial (with respect to its maximum point) and strictly radially decreasing.
Since $\alpha=\lim_{|x|\to\infty}u(x)|x|^{n-4}$ and $\lambda(a)^{n-4}u(a)=\alpha$, it is easy to see
\[
\lim_{x\to0}v(x)=\alpha\lambda(a)^{-\frac{n-4}{2}},
\qquad
v(e)=\alpha\lambda(a)^{-\frac{n-4}{2}}.
\]
Since $v$ is radial and strictly decreasing away from its maximum point, it is symmetric with respect to the hyperplane through $\tfrac12 e$ with normal $e$. Hence, for any $-\tfrac12<h<\tfrac12$,
\[
 v\bigl((\tfrac12-h)e\bigr)=v\bigl((\tfrac12+h)e\bigr).
\]
This implies
\[
\frac{\lambda(a)^{\frac{n-4}{2}}}{\left|\frac12-h\right|^{n-4}}u\!\left(\lambda(a)\frac{\frac12+h}{\frac12-h}e+a\right)
=
\frac{\lambda(a)^{\frac{n-4}{2}}}{\left|\frac12+h\right|^{n-4}}u\!\left(\lambda(a)\frac{\frac12-h}{\frac12+h}e+a\right).
\]

Given $x\neq0$, write $x=te$ with $t=|x|$ and $e=x/|x|$. Choose $h$ so that $t=\frac{\frac12+h}{\frac12-h}$. Then the previous identity yields
\[
 u(\lambda(a)te+a)=\frac{1}{t^{n-4}}u\!\left(\lambda(a)\frac{1}{t}e+a\right),
\]
that is,
\[
 u(\lambda(a)x+a)=\frac{1}{|x|^{n-4}}u\!\left(\lambda(a)\frac{x}{|x|^{2}}+a\right).
\]
The equivalent formulation follows by setting $y=\lambda(a)x+a$.
\end{proof}
We recall the following lemma of Li--Zhu~\cite{LiZhu}.

\begin{lem}\label{lem:Li-Zhu Duke 1995}
Let $f\in C^{1}(\mathbb{R}^{n})$, $n\ge1$, and $\nu>0$. Suppose that for every $x\in\mathbb{R}^{n}$ there exists $\lambda(x)>0$ such that
\[
\left(\frac{\lambda(x)}{|y-x|}\right)^{\nu}f\!\left(x+\frac{\lambda(x)^{2}(y-x)}{|y-x|^{2}}\right)=f(y),
\qquad y\in\mathbb{R}^{n}\setminus\{x\}.
\]
Then there exist $a\ge0$, $d>0$, and $\bar{x}\in\mathbb{R}^{n}$ such that
\[
 f(x)=\pm\left(\frac{a}{d+|x-\bar{x}|^{2}}\right)^{\frac{\nu}{2}}.
\]
\end{lem}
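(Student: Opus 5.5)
The plan is to follow the argument of Li--Zhu, which proceeds by differentiating the functional identity to extract an explicit formula for $f$. Throughout we may assume $f\not\equiv 0$; otherwise we take $a=0$. The first step is to show that $f$ never vanishes and hence has a constant sign. If $f(x_0)=0$ for some $x_0$, the identity at the point $x_0$ and continuity give, as $y\to\infty$,
\[
f(y)=\left(\frac{\lambda(x_0)}{|y-x_0|}\right)^{\nu}f\!\left(x_0+\frac{\lambda(x_0)^2(y-x_0)}{|y-x_0|^2}\right),
\]
so $|y|^{\nu}f(y)\to \lambda(x_0)^{\nu}f(x_0)=0$. At any other point $x$ the same limit equals $\lambda(x)^{\nu}f(x)$, so $f(x)=0$ for every $x$. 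Thus, when $f\not\equiv0$, $f$ is nowhere zero and we may replace $f$ by $-f$ to assume $f>0$. In that case the limit $\alpha\coloneqq\lim_{|y|\to\infty}|y|^{\nu}f(y)$ exists, and it equals $\lambda(x)^{\nu}f(x)>0$ for every $x$. This gives the explicit formula $\lambda(x)=(\alpha/f(x))^{1/\nu}$, so $\lambda$ is $C^1$ and independent of any choice.

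The second step is to expand the identity at infinity to second order. Write $y=x+z$ with $|z|\to\infty$ and set $w=\lambda(x)^2 z/|z|^2\to0$. The identity becomes
\[
|y|^{\nu}f(y)=\lambda(x)^{\nu}\left(\frac{|x+z|}{|z|}\right)^{\nu}\Bigl(f(x)+\nabla f(x)\cdot w+o(|w|)\Bigr).
\]
For a fixed direction $z=te$ with $t\to\infty$, the coefficient of $1/t$ on the right-hand side is
\[
\lambda(x)^{\nu}\Bigl(\nu f(x)\,x\cdot e+\lambda(x)^2\,\nabla f(x)\cdot e\Bigr).
\]
Comparing with the expansion of the left-hand side along rays through the origin, the function $A(y)=|y|^\nu f(y)$ has a first-order expansion $A(te)=\alpha+\beta(e)/t+o(1/t)$. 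This expansion is forced by the identity at $x=0$, with $\beta(e)=\alpha^{1+2/\nu}f(0)^{-2/\nu}\nabla f(0)\cdot e/f(0)$, which is linear in $e$; we write it as $\beta(e)=b\cdot e$. Substituting $y=x+te$ and re-expanding $A$ along a shifted ray yields $b\cdot e$ plus a term $\nu\alpha\, x\cdot e$, coming from $|x+te|^\nu/t^\nu$.

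Matching the $1/t$ coefficients then gives, for all $e$ and after simplification,
\[
\lambda(x)^{\nu+2}\,\nabla f(x)+\nu\alpha\, x=b+\nu\alpha\, x-\nu\alpha\, x \ \text{(after cancellations)}.
\]
Using $\lambda^{\nu}f=\alpha$, this reduces to an identity of the form $\nabla\bigl(f^{-2/\nu}\bigr)(x)=c_1x+c_2$ with constants $c_1\in\mathbb R$ and $c_2\in\mathbb{R}^n$. I expect this bookkeeping to be the main obstacle. The $o(1/t)$ remainders must be uniform enough to justify the comparison, which only uses $f\in C^1$ and a first-order Taylor expansion at the fixed points $x$ and $0$. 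One also has to be careful to use the identity at the two different centers $0$ and $x$ for the same far point $y$.

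The final step is to integrate. The relation gives $f^{-2/\nu}(x)=\tfrac{c_1}{2}|x|^2+c_2\cdot x+c_3$, that is, a quadratic polynomial $c(d+|x-\bar x|^2)$ once $c_1\neq0$. Positivity of $f$ together with $|x|^\nu f\to\alpha>0$ forces $c_1>0$ and $d>0$. Hence $f=(a/(d+|x-\bar x|^2))^{\nu/2}$, and restoring the sign gives the stated form with the $\pm$ sign.
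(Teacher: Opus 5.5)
Your argument is correct and is essentially the original Li--Zhu proof, which the paper cites rather than reproves. The limit $\lim_{|y|\to\infty}|y|^\nu f(y)=\lambda(x)^\nu f(x)$ settles the sign and determines $\lambda$, and matching the $1/|y|$ terms for two centers makes $\nabla\bigl(f^{-2/\nu}\bigr)$ affine.

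One fix to your bookkeeping: re-expanding $A$ along the shifted ray gives $b\cdot e/t$ alone, not $b\cdot e$ plus $\nu\alpha\,x\cdot e$. The $\nu\alpha\,x\cdot e$ term arises only on the center-$x$ side. At center $0$ the identity reads exactly $|y|^\nu f(y)=\lambda(0)^\nu f\bigl(\lambda(0)^2y/|y|^2\bigr)$, so $|y|^\nu f(y)=\alpha+b\cdot y/|y|^2+o(|y|^{-1})$ uniformly in direction, hence $A(x+te)=\alpha+b\cdot e/t+o(1/t)$. This gives directly $\lambda(x)^{\nu+2}\nabla f(x)+\nu\alpha x=b$, i.e.\ $\nabla\bigl(f^{-2/\nu}\bigr)=2\alpha^{-2/\nu}x-\tfrac{2}{\nu}\alpha^{-1-2/\nu}b$, so $c_1=2\alpha^{-2/\nu}>0$ explicitly.
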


\begin{proof}[Proof of Theorem~\ref{Thm A}]
Lemma~\ref{lem:Kelvin keeps} shows that $u$ satisfies the hypothesis of Lemma~\ref{lem:Li-Zhu Duke 1995} with $\nu=n-4$. Therefore, for some $\bar x\in\mathbb{R}^n$ and some $a>0$, $d>0$,
\[
 u(x)=\left(\frac{a}{d+|x-\bar{x}|^{2}}\right)^{\frac{n-4}{2}}.
\]
This proves Theorem~\ref{Thm A}.
\end{proof}

We remark that only $C^0$ regularity is needed in Lemma~\ref{lem:Li-Zhu Duke 1995}; see the appendix by Li and Nirenberg in~\cite{Li2004}. A version for finite measures is established in~\cite{Frank-Lieb2010}.

%%%%%%%%%%%%%%%%%%%%%%%%%%
%%%%%%%%%%%%%%%%%%%%%%%%%%

\section{An alternative classification argument}\label{sec:classificationode}

In this section we give an alternative proof of Theorem \ref{Thm A} using ODE analysis, which is of independent interest.

Let $u\in\dot H^2(\R^n)$ be a positive, radial solution of the Euler--Lagrange equation \eqref{eq:el} with some $\Lambda>0$. We write $u(r)$ instead of $u(x)$ with $|x|=r$ and pass to logarithmic coordinates $t=\log r$, setting
\[
u(r)=r^{-\frac{n-4}{2}}v(\log r) \,.
\]
Then the equation for $u$ becomes the equation 
\begin{equation}
	L_{\frac{n-4}{2}}L_{\frac n2}v
	=\Lambda v^{\frac2{n-4}}
	L_{\frac{n-2}{2}}\left(v^{\frac{n-2}{n-4}}\right),
	\label{eq:autonomous-Lambda}
\end{equation}
where we write
\[
\begin{aligned}
	L_{\frac{n-4}{2}} =-\frac{d^2}{dt^2}+\frac{(n-4)^2}{4},\qquad
	L_{\frac n2}=-\frac{d^2}{dt^2}+\frac{n^2}{4},\qquad
	L_{\frac{n-2}{2}}=-\frac{d^2}{dt^2}+\frac{(n-2)^2}{4}.
\end{aligned}
\]
A positive homoclinic is a solution \(v\in C^4(\R)\) satisfying
\begin{equation}\label{homoclinic limits}
    v>0,\qquad v,v',v'',v'''\longrightarrow0
\quad\text{as }t\to\pm\infty.
\end{equation}
These conditions are satisfied, in particular, for $v$ coming from a minimizer $u$, as follows from Step 3 of the proof of Lemma \ref{lem:radial-minimizer}.

\begin{thm}
	\label{thm:main}
	Let $\Lambda>0$. If $v$ is a positive homoclinic for \eqref{eq:autonomous-Lambda}, then, for some $t_0\in\R$,
	\[
	{
		v(t)=\left(\frac{(n-4)(n+2)}{4\Lambda}\right)^{\frac{n-4}{4}}
		\sech^{\frac{n-4}{2}}(t-t_0).
	}
	\]
\end{thm}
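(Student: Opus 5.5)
The plan is to reduce the fourth-order autonomous ODE \eqref{eq:autonomous-Lambda} to a second-order one by a substitution, and then to use first integrals together with the homoclinic conditions \eqref{homoclinic limits}.

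\emph{Substitution.} The natural guess, motivated by the explicit solution, is $v=w^{\frac{n-4}{2}}$ for a positive function $w$; the expected solution then has $w=c\,\sech$. Equivalently, one could write $v^{\frac{2}{n-4}}=w$. The point is that $\sech$ satisfies $w''=w-2w^3$ and, in particular, $(w')^2=w^2-w^4$.

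\emph{Step 1: compute both sides.} I would expand both sides of \eqref{eq:autonomous-Lambda} in terms of $w,w',w'',w''',w''''$. Using $v^{\frac{n-2}{n-4}}=w^{\frac{n-2}{2}}$, the right side is
\[
\Lambda w\left(-\bigl(w^{\frac{n-2}{2}}\bigr)''+\tfrac{(n-2)^2}{4}w^{\frac{n-2}{2}}\right),
\]
which is second order in $w$. The left side is fourth order. After dividing by $w^{\frac{n-4}{2}-4}$, both sides become polynomials in $w$ and its derivatives.

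\emph{Step 2: first integrals.} Since the equation is autonomous and variational (it is the Euler--Lagrange equation of the radial functional in logarithmic variables), it has a conserved Hamiltonian $H(v,v',v'',v''')$. By \eqref{homoclinic limits}, $H$ vanishes along a homoclinic. I would introduce the quantity
\[
P:=(w')^2-w^2+\kappa w^4,\qquad \kappa:=\frac{4\Lambda}{(n-4)(n+2)},
\]
which vanishes identically on the expected solution. The aim is to show that the equation together with $H=0$ yields a second-order linear ODE for $P$ with coefficients depending on $w$. The candidate form is
\[
w^{2}P''-(\text{coefficient})\,w w' P'+(\ldots)P=0,
\]
possibly combined with the identity $H=c_1P^2+c_2w^2(\ldots)P$.

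\emph{Step 3: force $P\equiv 0$.} Given such an identity, I would exploit the decay as $t\to\pm\infty$. Near $\pm\infty$ the function $w$ behaves like $e^{\mp t}$, and $P$ then has a definite sign structure. A maximum-principle or Gronwall-type argument on $P$, or positivity of the resulting quadratic form in the Hamiltonian identity, should give $P\equiv 0$.

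\emph{Step 4: integrate.} From $(w')^2=w^2-\kappa w^4$ with $w>0$, the homoclinic solutions are exactly $w=\kappa^{-1/2}\sech(t-t_0)$. Hence
\[
v=\kappa^{-\frac{n-4}{4}}\sech^{\frac{n-4}{2}}(t-t_0),
\]
which is the claimed formula.

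\emph{Main obstacle.} The hard part is Step 2: finding the right algebraic combination and showing it closes, so that the fourth-order equation factors through $P$. If no clean linear ODE for $P$ emerges, my fallback is an alternative route. I would use the classification already established in Theorem~\ref{Thm A} for minimizers, but that does not cover arbitrary homoclinics. So instead I would use a phase-plane/energy argument: with $H=0$ imposed, analyze the two-dimensional unstable manifold at the origin. The linearization at $0$ has eigenvalues $\pm\frac{n-4}{2},\pm\frac n2$. Homoclinics must leave along the slow direction $e^{\frac{n-4}{2}t}$, because the nonlinearity is superlinear. I would then show that the one-parameter family of such orbits (modulo translation) is a single orbit, using monotonicity of $v'/v$ and comparison with the explicit solution.
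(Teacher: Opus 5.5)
There is a genuine gap. Steps 2--3 contain the whole theorem, and the mechanism you propose for them cannot close the argument.

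Your choice of $P$ is good. After normalizing $\kappa=1$ you have $w=1/f$ with $f=v^{-2/(n-4)}$ as in the paper, and $P=-\bigl(f^2-(f')^2-1\bigr)/f^4$, which is exactly the paper's key quantity. Moreover, $H=0$ does give a system linear in $Q\coloneqq f^2-(f')^2-1$ and $R\coloneqq f''-f$, with solution-dependent coefficients: $Q'=-2f'R$ and, by \eqref{eq:f-energy}, $2f'R'=\bigl[2(n-2)(f')^2/f+f''+f\bigr]R-\frac{n^2-4}{4f^2}\bigl(f^2-(f')^2\bigr)Q$. However, three things go wrong:
\begin{enumerate}
\item The third derivative enters $H$ only multiplied by $v'$, so the resulting second-order equation for $P$ degenerates at the critical point of $v$.
\item Its coefficients have a usable sign only once you know $|f'|<f$ and $f''>0$. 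These are the nontrivial Lemmas~\ref{lem:factorization} (via the perfect squares \eqref{eq:Tminus}--\eqref{eq:Tplus}) and~\ref{lem:convex}, which you do not have.
\item Even granting this, a maximum-principle or Gronwall argument on the two half-lines only shows that $P$ keeps the sign of its value at the critical point while $|P|$ decreases to $0$ away from it. That is precisely the content of Lemmas~\ref{lem:trapping} and~\ref{lem:ordering}. It is fully consistent with $P\not\equiv0$: it only says a homoclinic with $f(0)\ne1$ lies strictly above or strictly below $v_*$.
\end{enumerate}
The decay at $\pm\infty$ cannot help, since $|P|\le (f^2+1)/f^4\to0$ for every homoclinic. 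Nor is there a positive quadratic form to exploit: $H$ written in terms of $(Q,R,R')$ (see \eqref{eq:f-Hamiltonian}) contains terms linear in $R$, $R'$ and $Q$ and is not sign-definite.

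The missing idea is a global comparison of two homoclinics that uses the self-adjoint, variational structure rather than pointwise ODE information. In the paper this is Lemma~\ref{lem:no-ordering}. Testing the equation for $v$ against $w$ and vice versa gives the Picone-type identity \eqref{eq:picone-integrated}. Its left side is strictly positive if $v>w$, because $L_{\frac{n-2}{2}}\bigl(w^{\frac{n-2}{n-4}}\bigr)>0$ (Corollary~\ref{cor:nonlinear-positive}, which again rests on $\eta>0$). Its right side consists of boundary terms tending to zero. Applied with $v_*$, this excludes both alternatives left open by the ordering lemma and forces $f(0)=1$, i.e.\ $P\equiv0$; after that your Step~4 is correct.

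Your fallback does not supply this either:
\begin{enumerate}
\item Modulo translations, the orbits in the two-dimensional unstable manifold form a one-parameter family, and proving that only one of them is homoclinic is the theorem itself.
\item The claim that homoclinics leave along $e^{\frac{n-4}{2}t}$ ``because the nonlinearity is superlinear'' is not an argument. In the paper it follows from $|v'/v|<\frac{n-4}{2}$, i.e.\ from $T_\pm>0$.
\item Monotonicity of $v'/v$ is equivalent to $\eta>0$, i.e.\ to Lemma~\ref{lem:convex}, which needs proof.
\item ``Comparison with the explicit solution'' has no mechanism for ruling out an ordered pair without an identity like \eqref{eq:picone-integrated}.
\end{enumerate}
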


We begin with a preliminary reduction. Replacing \(v\) by
\(\bigl(4\Lambda/((n-4)(n+2))\bigr)^{(n-4)/4}v\), we therefore reduce to
\begin{equation}
	{
		L_{\frac{n-4}{2}}L_{\frac n2}v
		=\frac{(n-4)(n+2)}4\,v^{\frac2{n-4}}
		L_{\frac{n-2}{2}}\left(v^{\frac{n-2}{n-4}}\right).
	}
	\label{eq:normalized}
\end{equation}
Direct differentiation shows that
\begin{equation}
	v_*(t)\coloneqq\sech^{\frac{n-4}{2}}t
	\label{eq:bubble}
\end{equation}
solves \eqref{eq:normalized}.  We shall prove that every positive
homoclinic of \eqref{eq:normalized} is a translate of \(v_*\).

\medskip

The following proof uses some ideas from the fourth-order classification arguments in
\cite{FrankKonig19,FrankKonig19b}. Other parts, including identities invlving the perfect square in Lemma \ref{lem:factorization} and the inequality in Lemma \ref{lem:trapping} are specific to the equation that we study here.

We outline the proof in four steps:
\begin{itemize}
\item[1.] Factor the fourth-order operator into second-order factors with perfect-square identities, yielding the essential \eqref{eq:logarithmic-bound}, see Lemma \ref{lem:factorization};
\item[2.] Use the factors to prove the positivity of the ratio of the radial sectional curvature to the sectional curvature of
a plane tangent to a spherical orbit,  see Remark \ref{geometric meaning of eta} and  Lemma \ref{lem:convex};
\item[3.] Extract an ordering of homoclinic profiles, see Lemma \ref{lem:ordering};
\item[4.] Use self-adjointness and a signed integral identity to rule out a
strictly ordered pair, see Lemma \ref{lem:no-ordering}.
\end{itemize}

The principal new feature here is that the nonlinearity contains the
second-order expression \(L_{\frac{n-2}{2}}\left(v^{\frac{n-2}{n-4}}\right)\).  The modified factors \(T_\pm\) in
\eqref{eq:Tpm-def} below  absorb this term and produce the perfect squares
\eqref{eq:Tminus}--\eqref{eq:Tplus}.  This also yields
\(L_{\frac{n-2}{2}}\left(v^{\frac{n-2}{n-4}}\right)>0\), i.e., the scalar curvature of $v^{\frac 4{n-4} }(dt^2+g_{\mathbb S^{n-1}})$ is positive, as a consequence rather than as a hypothesis.  The ordering
step is obtained from a simple monotonicity property of the zero-energy
reduced system (Lemma~\ref{lem:trapping}).  This property requires only
the leading-order endpoint behaviour of the homoclinic.

%%%%%%%%%%%%%%%%%%%%%%%%%%%%%%%%

We now start to prove Theorem \ref{thm:main}. Define
\[
A_-\coloneqq-\frac{d^2}{dt^2}-2\frac{d}{dt}+\frac{n(n-4)}4,
\qquad
A_+\coloneqq-\frac{d^2}{dt^2}+2\frac{d}{dt}+\frac{n(n-4)}4,
\]
so that
\begin{equation}
	A_+A_-=A_-A_+=L_{\frac{n-4}{2}}L_{\frac n2}.
	\label{eq:linear-factor}
\end{equation}
Set
\begin{equation}
	T_\pm\coloneqq A_\pm v
	-\frac{(n-4)(n-2)(n+2)}{4n}\,v^{\frac n{n-4}}.
	\label{eq:Tpm-def}
\end{equation}

\begin{lem}
	\label{lem:factorization}
	Every positive homoclinic $v$ of \eqref{eq:normalized} satisfies
	\begin{align}
		A_+T_-&=\frac{(n-2)(n+2)}{2(n-4)}
		v^{\frac{8-n}{n-4}}\left(v'-\frac{n-4}{2}v\right)^2,
		\label{eq:Tminus}\\
		A_-T_+&=\frac{(n-2)(n+2)}{2(n-4)}
		v^{\frac{8-n}{n-4}}\left(v'+\frac{n-4}{2}v\right)^2.
		\label{eq:Tplus}
	\end{align}
	Moreover, there holds
	\begin{equation}
		T_\pm>0,\qquad A_\pm v>0,
		\qquad
		-\frac{n-4}{2}<\frac{v'}v<\frac{n-4}{2}.
		\label{eq:logarithmic-bound}
	\end{equation}
\end{lem}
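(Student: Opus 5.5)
The plan is to prove the two identities \eqref{eq:Tminus}--\eqref{eq:Tplus} by direct computation, and then get the positivity statements from a maximum principle for the second-order operators $A_\pm$ on $\R$, applied to functions that decay at $\pm\infty$.

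\emph{The identities.} Write $p=\frac n{n-4}$ and $c=\frac{(n-4)(n-2)(n+2)}{4n}$. By \eqref{eq:linear-factor} and \eqref{eq:normalized},
\[
A_+T_-=L_{\frac{n-4}{2}}L_{\frac n2}v-c\,A_+(v^p)=\frac{(n-4)(n+2)}4\,v^{\frac2{n-4}}L_{\frac{n-2}{2}}\bigl(v^{\frac{n-2}{n-4}}\bigr)-c\,A_+(v^p).
\]
Both terms are explicit in $v,v',v''$; the fourth-order terms have already been absorbed by the equation. I would expand $(v^{\frac{n-2}{n-4}})''$ and $(v^p)''$, $(v^p)'$ via the chain rule. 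The $v^{p-1}v''$ terms should cancel; this is precisely how $c$ is determined. The $v^{p-2}(v')^2$, $v^{p-1}v'$ and $v^p$ terms should then combine into $\frac{(n-2)(n+2)}{2(n-4)}v^{p-2}\bigl(v'-\frac{n-4}2v\bigr)^2$, noting $p-2=\frac{8-n}{n-4}$. The $A_-T_+$ identity is the same computation with $d/dt\mapsto -d/dt$, which interchanges $A_+$ and $A_-$. This part is routine bookkeeping.

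\emph{Positivity of $T_\pm$.} The operator $A_+$ has characteristic roots $r$ solving $r^2-2r-\frac{n(n-4)}4=0$, that is, $r=1\pm\frac{n-2}2$, one positive and one negative. Hence $A_+$ has a positive Green's function $G_+$ on $\R$, a two-sided exponential, and every bounded solution of $A_+f=g$ with $f\to0$ at $\pm\infty$ equals $G_+*g$. Since $v$ and its derivatives tend to $0$ by \eqref{homoclinic limits}, $T_-\to0$ at $\pm\infty$. Thus $T_-=G_+*(\text{RHS of }\eqref{eq:Tminus})\ge0$. Equivalently, a negative interior minimum of $T_-$ would contradict $A_+T_-\ge0$ by the maximum principle, since the zeroth-order coefficient is positive. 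Strict positivity follows because the right side is not identically zero: otherwise $v'=\frac{n-4}2v$, so $v=Ce^{\frac{n-4}2t}$, which is not homoclinic. The same argument with $A_-$ gives $T_+>0$. Then $A_\pm v=T_\pm+c\,v^p>0$.

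\emph{The logarithmic bound.} Here I would use $T_\pm$ together with the symmetry between the two factors. Consider $A_+v-A_-v=4v'$. One route is to write $w=v'/v$ and look at where $w$ reaches $\pm\frac{n-4}2$, in the spirit of \cite{FrankKonig19}. A more direct route is to set $\phi=v'-\frac{n-4}2v$, or better $\psi=e^{-\frac{n-4}2t}v$, and express $A_+T_-$ in terms of $\psi$. The idea is to show the quantity $v'-\frac{n-4}2v$ is negative, using $v\sim e^{-\frac{n-4}2|t|}$-type behaviour at the ends. For that I would need the endpoint asymptotics of a homoclinic, obtained from the linearization $L_{\frac{n-4}2}L_{\frac n2}$ whose decaying modes are $e^{\mp\frac{n-4}2t}$. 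This gives $v'/v\to\mp\frac{n-4}2$ as $t\to\pm\infty$.

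For interior points, I would try to derive from $T_->0$ a first-order differential inequality: factor $A_+=-(\frac d{dt}-r_1)(\frac d{dt}-r_2)$ with $r_1=\frac n2$ and $r_2=-\frac{n-4}2$. Then $(\frac d{dt}-\frac n2)\bigl(v'+\frac{n-4}2v\bigr)<0$ wherever $v^p$ is dropped; since the bracket decays at $+\infty$, integration against $e^{-\frac n2t}$ yields $v'+\frac{n-4}2v>0$. Symmetrically, $A_-v>0$ gives $v'-\frac{n-4}2v<0$. Indeed $A_\pm v>0$ alone suffices: $A_-=-(\frac d{dt}+\frac n2)(\frac d{dt}-\frac{n-4}2)$, and the bracket vanishes at $-\infty$ faster than $e^{\frac n2t}$ grows. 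This makes the inequality strict.

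I expect the main obstacle to be the algebraic verification of the perfect squares. The decay bookkeeping in the integration steps is checked using \eqref{homoclinic limits}.
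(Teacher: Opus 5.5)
Your proposal is correct, and most of it coincides with the paper's proof. The identities come from the same reduction $A_+T_-=L_{\frac{n-4}{2}}L_{\frac n2}v-c\,A_+(v^{p})$ combined with \eqref{eq:normalized}. Nonnegativity $T_\pm\ge0$ follows from the same minimum-principle argument using $T_\pm\to0$ at $\pm\infty$. The logarithmic bound comes from the same integrating-factor identities $\bigl[e^{-\frac n2t}(v'+\frac{n-4}2v)\bigr]'=-e^{-\frac n2t}A_+v$ and $\bigl[e^{\frac n2t}(v'-\frac{n-4}2v)\bigr]'=-e^{\frac n2t}A_-v$.

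The one genuine difference is how you obtain strict positivity of $T_\pm$. You use two facts: $A_\pm$ has an everywhere positive Green's function, and the right-hand side of \eqref{eq:Tminus} (resp.\ \eqref{eq:Tplus}) is not identically zero. The paper instead goes in a different order. It first derives $A_\pm v>0$ and the logarithmic bound from $T_\pm\ge0$, which makes the right-hand sides strictly positive everywhere. It then excludes a zero $t_1$ of $T_\pm$ by noting $T_\pm'(t_1)=0$ and $T_\pm''(t_1)\ge0$, so $A_\mp T_\pm(t_1)\le0$.

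Your route decouples strict positivity from the logarithmic bound. However, it rests on the representation $T_-=G_+*g$, which you assert rather than prove. It is easily justified: use the factorization $A_+=-\bigl(\frac{d}{dt}-\frac n2\bigr)\bigl(\frac{d}{dt}+\frac{n-4}2\bigr)$, integrate twice, and use $T_-,T_-'\to0$ (which needs $v'''\to0$). The paper's route, by contrast, needs only pointwise information at an extremum.

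Two small remarks. The endpoint asymptotics $v'/v\to\mp\frac{n-4}2$ you contemplate are unnecessary and would need proof; as you eventually note, $A_\pm v>0$ together with the homoclinic limits suffices. Also, the inequality for $v'+\frac{n-4}2v$ comes from $T_+\ge0$, i.e.\ $A_+v>0$, not from $T_-$.
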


\begin{proof}

By computations, we have
\begin{equation}
\begin{aligned}
A_+\!\left(v^{\frac n{n-4}}\right)
={}&\frac n{n-2}v^{\frac2{n-4}}
L_{\frac{n-2}{2}}\!\left(v^{\frac{n-2}{n-4}}\right)-\frac{2n}{(n-4)^2}v^{\frac{8-n}{n-4}}
\left(v'-\frac{n-4}{2}v\right)^2.
\end{aligned}
\label{eq:power-factor}
\end{equation}
Applying \(A_+\) to \eqref{eq:Tpm-def} and using
\eqref{eq:linear-factor}, we get
\[
A_+T_-=L_{\frac{n-4}{2}}L_{\frac n2}v
-\frac{(n-4)(n-2)(n+2)}{4n}
A_+\!\left(v^{\frac n{n-4}}\right).
\]
Substituting \eqref{eq:normalized} and \eqref{eq:power-factor} into the above formula, we have \eqref{eq:Tminus}.
	  Reflection \(t\mapsto-t\)
	gives \eqref{eq:Tplus}.
	
	Since \(T_\pm\to0\) at both ends, a negative value of \(T_-\) would yield
	a negative global minimum.  At that point,
	\[
	A_+T_-=-T_-''+\frac{n(n-4)}4T_-<0,
	\]
	contrary to \eqref{eq:Tminus}.  Thus \(T_-\ge0\), and likewise
	\(T_+\ge0\).  Formula \eqref{eq:Tpm-def} now yields \(A_\pm v>0\).
	
	The identities
	\[
	\begin{aligned}
		\left[e^{-\frac n2t}
		\left(v'+\frac{n-4}{2}v\right)\right]'
		&=-e^{-\frac n2t}A_+v<0,\\
		\left[e^{\frac n2t}
		\left(v'-\frac{n-4}{2}v\right)\right]'
		&=-e^{\frac n2t}A_-v<0
	\end{aligned}
	\]
	and the zero limits at \(+\infty\) and \(-\infty\), respectively, give
	\(v'+\frac{n-4}{2}v>0\) and \(v'-\frac{n-4}{2}v<0\).
	Hence $-\frac{n-4}{2}<\frac{v'}v<\frac{n-4}{2}$ holds, and hence the right-hand sides of
	\eqref{eq:Tminus}--\eqref{eq:Tplus} are positive everywhere.

Finally, if \(T_-(t_1)=0\) at some point $t_1$, the nonnegativity of
\(T_-\) yields \(T_-'(t_1)=0\) and \(T_-''(t_1)\ge0\).  Thus the
left-hand side of \eqref{eq:Tminus} at \(t_1\) is
\[
(A_+T_-)(t_1)
=-T_-''(t_1)+2T_-'(t_1)+\frac{n(n-4)}4T_-(t_1)
=-T_-''(t_1)\le0,
\]
contradicting the strictly positive right-hand side of \eqref{eq:Tminus}.
Likewise, if \(T_+(t_1)=0\), then \(T_+'(t_1)=0\) and
\(T_+''(t_1)\ge0\), so the left-hand side of \eqref{eq:Tplus} is
\[
(A_-T_+)(t_1)
=-T_+''(t_1)-2T_+'(t_1)+\frac{n(n-4)}4T_+(t_1)
=-T_+''(t_1)\le0.
\]
This contradicts \eqref{eq:Tplus}.
Hence \(T_\pm>0\).
\end{proof}

%%%%%%%%%%%%%%%%%%%%%%%%%%%%%%%%
The identities \eqref{eq:Tminus}--\eqref{eq:Tplus} are the main analogue,
for the present equation, of the second-order factorization device used in
the fourth-order classification arguments of Frank--K\"onig \cite{FrankKonig19,FrankKonig19b}.

\medskip

Now we rewrite the metric in an alternative conformal factor.  
From now on, let
\begin{equation}
	f\coloneqq v^{-\frac2{n-4}},\qquad
	\eta\coloneqq\frac{ff''-(f')^2}{f^2-(f')^2}.
	\label{eq:f-def}
\end{equation}
Thus the conformal metric is
\[
g=v^{\frac4{n-4}}(dt^2+g_{\mathbb S^{n-1}})
=f^{-2}(dt^2+g_{\mathbb S^{n-1}}).
\]
The homoclinic limits and \eqref{eq:logarithmic-bound} give
\begin{equation}
f(t)\longrightarrow\infty\quad(t\to\pm\infty),
	\qquad |f'|<f.
	\label{eq:f-bound}
\end{equation}
\begin{remark}\label{geometric meaning of eta}
Let \(\bar e_1,\ldots,\bar e_{n-1}\) be a local orthonormal frame for
\(g_{\mathbb S^{n-1}}\).  Then
\[
e_0=f\partial_t,\qquad e_i=f\bar e_i\quad(1\le i\le n-1)
\]
is an orthonormal frame for \(g\).  
Thus \(\eta\) is the ratio of the radial sectional curvature to the
sectional curvature of a plane tangent to a spherical orbit, both
computed in the ambient metric \(g\).  In terms of the curvature
operator, \(ff''-(f')^2\) and \(f^2-(f')^2\) are its eigenvalues on
the subspaces spanned by \(e_0\wedge e_i\) and \(e_i\wedge e_j\),
respectively.  Since \(f^2-(f')^2>0\), positivity of \(\eta\) is
equivalent to positivity of all sectional curvatures of \(g\).
For the standard profile \(f=\cosh t\), both eigenvalues equal one,
so \(\eta\equiv1\) and \(g\) is the unit round sphere metric with its
two poles removed.
\end{remark}

The standard profile \(v_*\) corresponds to \(f_*(t)\coloneqq\cosh t\).
The definitions imply
\begin{equation}
	\eta-1=\frac{f(f''-f)}{f^2-(f')^2},
	\qquad
	\left(\artanh\frac{f'}f\right)'=\eta,
	\label{eq:eta-f}
\end{equation}
and
\begin{equation}
	\bigl(f^2-(f')^2\bigr)'=-2f'(f''-f).
	\label{eq:f-square-derivative}
\end{equation}

Define the Hamiltonian energy in terms of \(f\) by
\begin{equation}
	\begin{aligned}
		H\coloneqq f^{-n}\Biggl[&f^2(f''-f)^2-2f^2f'(f'''-f')
	+2f\bigl(f^2+(n-2)(f')^2\bigr)(f''-f)\\
		&+\frac{n^2-4}{4}\bigl(f^2-(f')^2\bigr)
		\bigl(1-f^2+(f')^2\bigr)\Biggr].
	\end{aligned}
	\label{eq:f-Hamiltonian}
\end{equation}
Recall \(v=f^{-(n-4)/2}\). Differentiating \eqref{eq:f-Hamiltonian}, we have
\[
H'=-\frac8{(n-4)^2}v'\left[
L_{\frac{n-4}{2}}L_{\frac n2}v
-\frac{(n-4)(n+2)}4v^{\frac2{n-4}}
L_{\frac{n-2}{2}}\left(v^{\frac{n-2}{n-4}}\right)\right]=0.
\]
To determine this constant $H$, we differentiate \(v=f^{-(n-4)/2}\)
successively.  By the homoclinic limits and \(|f'|<f\), it holds that
\[
f^{-\frac{n-2}{2}}f^{(j)}\longrightarrow0
\quad(j=1,2,3),\qquad
f^{-\frac{n-4}{2}}\longrightarrow0
\quad(t\to\pm\infty).
\]
Together with \(0<f^2-(f')^2\le f^2\), these limits imply that every term of
\eqref{eq:f-Hamiltonian} tends to zero.  Hence \(H=0\).
Rearranging the identity \(H=0\), we have
\begin{equation}
	{\begin{aligned}
			2f'f^{n-2}\left(\frac{f''-f}{f^{n-2}}\right)'
			={}&(f''-f)(f''+f)
			+\frac{n^2-4}{4f^2}\bigl(f^2-(f')^2\bigr)
			\bigl(1-f^2+(f')^2\bigr).
	\end{aligned}}
	\label{eq:f-energy}
\end{equation}

For later use, we mention that \(T_\pm\) satisfy
\begin{equation}
	\begin{aligned}
		\frac{2T_\pm}{(n-4)v}
		={}&\left(\eta+\frac{n-4}{2}\right)
		\left(1-\frac{(f')^2}{f^2}\right)
		+2\mp2\frac{f'}f-\frac{n^2-4}{2nf^2}>0,
	\end{aligned}
	\label{eq:T-f}
\end{equation}
which is obtained directly from \eqref{eq:Tpm-def} and Lemma \ref{lem:factorization}.

%%%%%%%%%%%%%%%%%%%%%%%%%%%

\begin{lem}
	\label{lem:center}
	The function \(f\) has exactly one critical point, which is a strict
	minimum.  After translation we may assume
	\begin{equation}\label{zero}
	    f'(0)=0,\qquad f'(t)<0\quad \text{for all}\ t<0,\qquad f'(t)>0\quad \text{for all}\ t>0.
	\end{equation}
\end{lem}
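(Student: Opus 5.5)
Existence and the translation step are immediate. By \eqref{eq:f-bound}, $f$ is smooth and positive with $f\to\infty$ at both ends, so it attains a global minimum at some $t_0$, where $f'(t_0)=0$. Translating in $t$ (the equation \eqref{eq:normalized} is autonomous) we may take $t_0=0$. The whole content of the lemma is therefore uniqueness and nondegeneracy: every critical point of $f$ is a strict local minimum. Indeed, if every critical point is a strict local minimum, two of them would force a local maximum in between, which would be another critical point that is not a minimum. So there is exactly one critical point, and \eqref{zero} follows from $f\to\infty$ at both ends.

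To show every critical point $t_1$ is a strict local minimum, I would evaluate the positivity information of Lemma~\ref{lem:factorization} at $t_1$. At $f'(t_1)=0$, \eqref{eq:T-f} reduces to
\[
\frac{2T_\pm}{(n-4)v}=\eta+\frac{n-4}{2}+2-\frac{n^2-4}{2nf^2},\qquad \eta=\frac{f''}{f}.
\]
This gives only a lower bound on $f''/f$ that is not obviously positive, so I would also use the energy identity \eqref{eq:f-energy}. At $t_1$ its left side vanishes because of the factor $f'$. Hence
\[
(f''-f)(f''+f)+\frac{n^2-4}{4}\,(1-f^2)=0,
\qquad\text{that is,}\qquad
(f'')^2=f^2+\frac{n^2-4}{4}(f^2-1)=\frac{n^2}{4}f^2-\frac{n^2-4}{4}.
\]
Thus $|f''(t_1)|$ is determined by $f(t_1)$, and the task is to fix the sign of $f''(t_1)$.

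Suppose $f''(t_1)<0$. Then $\eta(t_1)=f''/f=-\sqrt{n^2/4-(n^2-4)/(4f^2)}$, and inserting this into the $T_\pm>0$ condition gives
\[
\frac{n}{2}+\frac{2-n}{n}\cdot\frac{n+2}{2f^2}>\sqrt{\frac{n^2}{4}-\frac{n^2-4}{4f^2}} .
\]
Write $s=1/f^2$. Squaring requires the left side to be positive, which holds for $s<n^2/(n^2-4)$. Expanding yields a quadratic inequality in $s$. I expect this to fail on the admissible range of $s$, forced by realness $s\le n^2/(n^2-4)$, and perhaps also by $T_\pm>0$ itself, but this is the main obstacle: the algebra may not close by itself.

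If the algebra does not close, the backup is a dynamical argument. One would show that $f''(t_1)<0$, together with $H=0$, propagates to give a point where $T_-$ or $T_+$ vanishes. Alternatively, one would use $(\artanh(f'/f))'=\eta$ from \eqref{eq:eta-f}: since $f'/f$ moves from $-1$ to $1$, a negative $\eta$ at a critical point gives a second crossing. Finally, $f''(t_1)=0$ is excluded as follows. It would force $f(t_1)^2=(n^2-4)/n^2<1$, while the $T_\pm$ bound at $t_1$ reads $\frac n2>\frac{n^2-4}{2nf^2}$, i.e. $f^2>(n^2-4)/n^2$, a contradiction. So every critical point is a strict minimum.
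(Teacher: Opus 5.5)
Your setup is the paper's: existence of a minimum, the translation, and the reduction to ``every critical point is a strict local minimum'' are all fine. At a critical point $t_1$ you correctly extract $(f'')^2=\frac{n^2}{4}f^2-\frac{n^2-4}{4}$ from \eqref{eq:f-energy}, and $\eta+\frac n2>\frac{n^2-4}{2nf^2}$ with $\eta=f''/f$ from \eqref{eq:T-f}. Your exclusion of $f''(t_1)=0$ is also correct.

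The gap is that you never actually rule out $f''(t_1)<0$, which is the heart of the lemma. You stop at ``I expect this to fail'', concede the algebra may not close, and fall back on dynamical arguments that are only gestured at. Those backups are not arguments as stated. The $\artanh$ argument needs the limits of $f'/f$ at $\pm\infty$, which are not available at this stage. Moreover, a critical point with $\eta<0$ is merely a local maximum of $f$, which is not contradictory by itself.

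The missing step is a short computation inside your own framework. With $s=1/f^2$, squaring your inequality gives
\[
\Bigl(\frac n2-\frac{n^2-4}{2n}s\Bigr)^2-\Bigl(\frac{n^2}{4}-\frac{n^2-4}{4}s\Bigr)=\frac{n^2-4}{4}\,s\Bigl(\frac{n^2-4}{n^2}s-1\Bigr).
\]
This is positive only if $s>\frac{n^2}{n^2-4}$. That contradicts the positivity of the left side $\frac n2-\frac{n^2-4}{2n}s$, which your strict inequality against a nonnegative square root already forces.

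The paper reaches the same conclusion more cleanly, without square roots or a case split. Dividing the energy relation by $f^2$ gives $\frac{n^2-4}{4f^2}=\frac{n^2}{4}-\eta^2$. The $T_\pm$ bound then becomes $\eta+\frac n2>\frac n2-\frac2n\eta^2$, i.e.\ $\eta\bigl(\eta+\frac n2\bigr)>0$. Since $\eta+\frac n2>\frac{n^2-4}{2nf^2}>0$, this forces $\eta>0$, i.e.\ $f''(t_1)>0$, handling $f''<0$ and $f''=0$ at once.
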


\begin{proof}
	At a point where \(f'=0\), from \eqref{eq:f-energy} and \eqref{eq:T-f}, it holds that
	\[
	(f'')^2=\frac{n^2}{4}f^2-\frac{n^2-4}{4},
	\qquad
	\frac{f''}f+\frac n2>\frac{n^2-4}{2nf^2}>0.
	\]
	Substituting the above equality into the inequality yields
	\[
	\frac{f''}f\left(\frac{f''}f+\frac n2\right)>0.
	\]
	Thus \(f''>0\) at the critical point of $f$, and then  every critical point is a strict local minimum.
	Since \(f\to\infty\) at both ends, a minimum exists; two distinct
	minima would have a local maximum between them.  Hence the critical
	point is unique, and we finish the proof.
\end{proof}

Without loss of generality, \emph{ in the following proof we only consider the function $f$ satisfying \eqref{zero}.}
Now we want to prove the positivity of $\eta$. 

\begin{lem}
	\label{lem:convex}
	Every positive homoclinic satisfies
	\[
	ff''-(f')^2>0,\qquad f''>0
	\quad\text{on }\R.
	\]
	Equivalently, there holds \(\eta>0\).
\end{lem}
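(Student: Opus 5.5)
The plan is to prove the first inequality $ff''-(f')^2>0$, which is the same as $\eta>0$. The second one then follows at once: $ff''>(f')^2\ge0$ and $f>0$ give $f''>0$. Set
\[
g\coloneqq ff''-(f')^2,\qquad p\coloneqq \frac{f'}{f}\in(-1,1),
\]
where $|p|<1$ comes from \eqref{eq:f-bound}. Then $\eta=g/(f^2-(f')^2)$, and its sign is the sign of $g$.

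First I will record the endpoint and central information.
\begin{itemize}
\item At $t=0$ we have $f'(0)=0$ and $f''(0)>0$ by Lemma~\ref{lem:center}, so $g(0)=f(0)f''(0)>0$.
\item At $\pm\infty$, the homoclinic decay and \eqref{eq:logarithmic-bound} say that $v$ decays like $e^{-\frac{n-4}{2}|t|}$. I expect this to give $f\sim c_\pm e^{|t|}$, $p\to\pm1$ and $\eta\to1$. I will check this with the relation $(\artanh p)'=\eta$ from \eqref{eq:eta-f}, together with \eqref{eq:f-energy} in the limit $1/f^2\to0$.
\end{itemize}
If the asymptotics give only $\liminf\eta\ge0$, I will still argue by contradiction with a first zero of $g$, as follows.

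Suppose $g$ vanishes somewhere. By the reflection symmetry of the whole setup ($t\mapsto-t$ exchanges $T_+$ and $T_-$), I may assume there is a zero with $t>0$. Let $t_1>0$ be the first one. Then $g>0$ on $[0,t_1)$, $g(t_1)=0$ and $g'(t_1)\le0$. At $t_1$ we have $p\in(0,1)$ and $f''/f=p^2$. Next I solve the zero-energy relation \eqref{eq:f-energy} for $f'''$, using $f'\ne0$ at $t_1$. Since $g'=ff'''-f'f''$, a direct computation gives, at $t_1$,
\[
\frac{2p\,g'}{f^2}=(1-p^2)\Bigl[-(2n-5)p^2-1+\frac{n^2-4}{4}\Bigl(\frac1{f^2}-1+p^2\Bigr)\Bigr].
\]
So the sign of $g'(t_1)$ is that of the bracket. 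I then feed in the positivity \eqref{eq:T-f} with $\eta=0$:
\[
\frac{n-4}{2}(1-p^2)+2\mp2p>\frac{n^2-4}{2nf^2}.
\]
The aim is to show the bracket is positive, which contradicts $g'(t_1)\le0$.

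The main obstacle is exactly this algebra. The inequality \eqref{eq:T-f} bounds $1/f^2$ from above, while the bracket wants it bounded from below. So the naive combination may not close. If it fails, the first thing I will try is to use extra information along $[0,t_1]$. On this interval $(\artanh p)'=\eta>0$, so $p$ increases, and \eqref{eq:f-square-derivative} tells how $f^2-(f')^2$ decreases there, since $f''-f$ has a definite sign on the relevant region. From these I hope to derive a lower bound on $1/f^2$ in terms of $p$ at $t_1$. The specific candidate is to integrate $H=0$, written through $\bigl((f''-f)/f^{n-2}\bigr)'$ as in \eqref{eq:f-energy}, from $0$ to $t_1$, starting from $(f''(0))^2=\frac{n^2}4f(0)^2-\frac{n^2-4}4$. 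The alternative is to run a maximum-principle argument directly on $\eta$, or on $f''-f$, which by \eqref{eq:eta-f} carries the sign of $\eta-1$. In that version I would use the second-order equation for $T_\pm$ coming from \eqref{eq:Tminus}--\eqref{eq:Tplus} in place of the first-order identity.
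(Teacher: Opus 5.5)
There is a genuine gap, at exactly the step you flagged. Your formula for $g'(t_1)$ is correct: it is \eqref{eq:eta-f-equation} at $\eta=0$, multiplied by $f^2(1-p^2)$. The problem is that the bracket $B=\frac{(n-4)^2}{4}p^2-\frac{n^2}{4}+\frac{n^2-4}{4f^2}$ can never be positive at a zero of $\eta$ in $(0,\infty)$. Insert the upper bound on $\frac{n^2-4}{4f^2}$ that comes from $T_+>0$, which holds on all of $\R$ by Lemma~\ref{lem:factorization}. For $p\in(0,1)$ this gives
\[
B<\frac{(n-4)^2}{4}p^2-\frac{n^2}{4}+\frac{n(n-4)}{4}(1-p^2)+n(1-p)=-p\bigl((n-4)p+n\bigr)<0 .
\]
So at every such zero $g'<0$ and $\eta'<0$. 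Zeros are necessarily crossed downward, which is entirely consistent with $t_1$ being a first zero. No extra information from $[0,t_1]$ can yield a contradiction at $t_1$, whether from integrating $H=0$ or from lower bounds on $1/f^2$. The sign you are trying to prove is the opposite of the one that is forced. The contradiction has to come from what happens after $t_1$, that is, from the homoclinic behaviour at $+\infty$.

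The missing idea is to use this one-way crossing globally.

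\begin{enumerate}
\item Since $\eta(0)>0$ and $\eta'<0$ at any zero, a zero at some $t>0$ produces a point $t_1>0$ with $\eta(t_1)<0$.
\item Then $\eta<0$ on $(t_1,\infty)$, because returning to zero would require $\eta'\ge 0$ there.
\item Choose $T$ so that $\frac{n^2-4}{4f^2}\le n-4$ on $[T,\infty)$. For $-1\le\eta\le 0$ and $0<p<1$, the bracket in \eqref{eq:eta-f-equation} is then at most $-(n-4)$.
\item Since $f/(2f')>\tfrac12$, this gives $\eta'\le-\frac{n-4}{2}$, so $\eta\le-1$ for all large $t$.
\item Then $(\artanh p)'=\eta\le-1$ eventually drives $\artanh p$ negative, contradicting $0<p<1$ on $(0,\infty)$.
\end{enumerate}

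Reflection handles $t<0$, and $f''>0$ then follows as you say.

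Your expected asymptotics $\eta\to1$ at $+\infty$ would also close the argument once combined with the downward-only crossing. However, you give no proof of them. Since $\eta=\frac{ff''-(f')^2}{f^2-(f')^2}$ and the leading $e^{2t}$ terms cancel in both numerator and denominator, they would require next-order expansions of $f$, not just $v\sim ce^{-\frac{n-4}{2}t}$.
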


\begin{proof}
	By Lemma~\ref{lem:center},  \(\eta(0)>0\).
	Writing \eqref{eq:f-energy} in terms of \(\eta\), on \((0,\infty)\), we have
	\begin{equation}
		\begin{aligned}
			\eta'=\frac{f}{2f'}\Biggl[&
			\left(1+3\frac{(f')^2}{f^2}\right)\eta^2
			+2(n-4)\frac{(f')^2}{f^2}\eta
		+\frac{(n-4)^2}{4}\frac{(f')^2}{f^2}
			-\frac{n^2}{4}+\frac{n^2-4}{4f^2}\Biggr].
		\end{aligned}
		\label{eq:eta-f-equation}
	\end{equation}
	At a zero of \(\eta\) in \((0, +\infty)\), positivity of \(T_+\) in
	\eqref{eq:T-f} implies
	\begin{equation}
\frac{n^2-4}{4f^2}
<\frac{n(n-4)}4\left(1-\frac{(f')^2}{f^2}\right)
 +n\left(1-\frac{f'}f\right),
\label{eq:eta-zero-bound}
\end{equation}
	and using this in \eqref{eq:eta-f-equation}, we obtain
	\[
	\begin{aligned}
		\frac{2f'}f\eta'
		&=\frac{(n-4)^2}{4}\frac{(f')^2}{f^2}
		-\frac{n^2}{4}+\frac{n^2-4}{4f^2}\\
		&<-\frac{2f'}f\left(\frac{n-4}{2}\frac{f'}f+\frac n2\right)<0.
	\end{aligned}
	\]
	In particular, \(\eta'<0\) at every zero of \(\eta\) with \(t>0\).

Suppose, for contradiction, that $\eta\le 0$ at some $t>0$. Then there exists $t_1>0$ with $\eta(t_1)<0$.
We claim that $\eta(t)<0$ for $t>t_1$. Otherwise, by continuity there would be a first time $t_2>t_1$ with
\[
\eta(t_2)=0,\qquad \eta(t)<0\quad \text{for}\,\,t_1\le t<t_2.
\]
In particular $\eta'(t_2)\ge 0$, which contradicts the fact that $\eta'(t_2)<0$ at any zero of $\eta$ with $t_2>0$.

Since $f(t)\to\infty$ as $t\to\infty$, we can choose $T\ge t_1$ large such that
\[
\frac{n^2-4}{4f(t)^2}\le n-4\qquad\text{for all }t\in[T,\infty).
\]
	Whenever \(-1\le\eta\le0\),  due to \(0<f'/f<1\), we have
\[
\left(1+3\frac{(f')^2}{f^2}\right)\eta^2
+2(n-4)\frac{(f')^2}{f^2}\eta\\
+\frac{(n-4)^2}{4}\frac{(f')^2}{f^2}
-\frac{n^2}{4}+\frac{n^2-4}{4f^2}\le -(n-4)<0.
\]
	Since \(f/(2f')>1/2\), \eqref{eq:eta-f-equation} yields \(\eta'\le-(n-4)/2\).
    
	Therefore, there exists a large constant $T_1$ such that
	\(\eta\le-1\) for $t\ge T_1$.  But \eqref{eq:eta-f} would then imply
	\[
\left(\artanh\frac{f'}f\right)'=\eta\le-1,
	\]
	whereas \(0<f'/f<1\) on \((0, +\infty)\), contradicting the strict positivity of
\(\artanh(f'(t)/f(t))\).
	Hence \(\eta>0\) for \(t\ge0\).
	For the reflected profile \(\widetilde f(t)=f(-t)\), the corresponding
	quantity is \(\widetilde\eta(t)=\eta(-t)\).  The same argument therefore applies
	on \((-\infty, 0)\).  Finally, since 
	\(ff''=\eta\bigl(f^2-(f')^2\bigr)+(f')^2>0\), we get \(f''>0\).
\end{proof}

\begin{cor}
	\label{cor:nonlinear-positive}
	Every positive homoclinic satisfies
	\begin{equation}
		\begin{aligned}
			\frac{L_{\frac{n-2}{2}}(v^{\frac{n-2}{n-4}})}
			{v^{\frac{n-2}{n-4}}}
			&=\frac{n-2}{4f^2}
			\left[2\bigl(ff''-(f')^2\bigr)
			+(n-2)\bigl(f^2-(f')^2\bigr)\right]>0.
		\end{aligned}
		\label{eq:nonlinear-positive}
	\end{equation}
\end{cor}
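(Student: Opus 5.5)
The corollary has two parts: an algebraic identity expressing $L_{\frac{n-2}{2}}(v^{\frac{n-2}{n-4}})/v^{\frac{n-2}{n-4}}$ in terms of $f$, and the positivity of the right-hand side. I would prove the identity by direct computation and then read positivity off Lemma~\ref{lem:convex} and \eqref{eq:f-bound}.

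For the identity, set $w\coloneqq v^{\frac{n-2}{n-4}}$. Since $v=f^{-\frac{n-4}{2}}$, this gives $w=f^{-\frac{n-2}{2}}=f^{-m}$ with $m\coloneqq\frac{n-2}{2}$. Differentiating twice,
\[
w'=-m f^{-m-1}f',\qquad w''=m(m+1)f^{-m-2}(f')^2-m f^{-m-1}f''.
\]
Hence
\[
\frac{L_{\frac{n-2}{2}}w}{w}=-\frac{w''}{w}+m^2
=\frac{m}{f^2}\Bigl[ff''-(m+1)(f')^2+mf^2\Bigr].
\]
Since $m+1=2+(m-1)$, we can write $ff''-(m+1)(f')^2+mf^2=\bigl(ff''-(f')^2\bigr)+m\bigl(f^2-(f')^2\bigr)$. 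Substituting $m=\frac{n-2}{2}$ and pulling out a factor $\frac12$ gives
\[
\frac{L_{\frac{n-2}{2}}w}{w}=\frac{n-2}{4f^2}\Bigl[2\bigl(ff''-(f')^2\bigr)+(n-2)\bigl(f^2-(f')^2\bigr)\Bigr],
\]
which is exactly the identity in \eqref{eq:nonlinear-positive}.

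For positivity, Lemma~\ref{lem:convex} gives $ff''-(f')^2>0$ on $\mathbb R$, and \eqref{eq:f-bound} gives $f^2-(f')^2>0$. Since $n\ge5$, both coefficients $2$ and $n-2$ and the prefactor $\frac{n-2}{4f^2}$ are positive, so the whole expression is strictly positive.

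There is no real obstacle here. The only care needed is the bookkeeping in the regrouping $(m+1)(f')^2=(f')^2+m(f')^2$, and all of the analytic content is already contained in Lemma~\ref{lem:convex}.
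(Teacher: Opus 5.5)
Your proof is correct and follows the paper's route. The identity comes from the direct computation with $w=f^{-\frac{n-2}{2}}$, which the paper leaves implicit and you carry out correctly. Positivity then follows, as in the paper, from $ff''-(f')^2>0$ in Lemma~\ref{lem:convex} together with $f^2-(f')^2>0$ from \eqref{eq:f-bound}.
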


\begin{proof}
	
	The claimed positivity follows from \(f^2-(f')^2>0\) and
	Lemma~\ref{lem:convex}.
\end{proof}

%%%%%%%%%%%%%%%%%%%%%%%%%%%%%%%%

By Lemma~\ref{lem:convex}, \(f''>0\).  On \((0, +\infty)\),
\eqref{eq:f-energy} and \eqref{eq:f-square-derivative} imply the following trapping
property.

\begin{lem}
	\label{lem:trapping}
	Unless \(f(t)=\cosh t\) on \(\R\),
	\[
	\bigl(f(t)^2-f'(t)^2-1\bigr)\bigl(f''(t)-f(t)\bigr)>0
	\qquad\text{for all}\ t\ne0.
	\]
\end{lem}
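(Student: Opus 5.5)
The plan is a phase-plane argument for the pair
\[
P\coloneqq f^2-(f')^2-1,\qquad Q\coloneqq f''-f
\]
on $(0,\infty)$, where $f'>0$ by \eqref{zero}. The case $t<0$ then follows by applying the same argument to the reflected profile $t\mapsto f(-t)$, as in the proof of Lemma~\ref{lem:convex}.

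\emph{Step 1: a first-order system.} By \eqref{eq:f-square-derivative},
\[
P'=-2f'Q .
\]
Expand the left side of \eqref{eq:f-energy} as $2f'Q'-2(n-2)\frac{(f')^2}{f}Q$, and set $c\coloneqq\frac{n^2-4}{4}>0$. Then \eqref{eq:f-energy} becomes
\[
2f'Q'=a\,Q-b\,P,\qquad a\coloneqq f''+f+2(n-2)\frac{(f')^2}{f}>0,\qquad b\coloneqq c\,\frac{f^2-(f')^2}{f^2}>0 .
\]
Here $a>0$ because $f''>0$ (Lemma~\ref{lem:convex}), and $b>0$ by \eqref{eq:f-bound}. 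For $t>0$ this is a linear system in $(P,Q)$ whose coefficients are continuous, since $f'>0$ there.

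\emph{Step 2: forward invariance of the ``wrong'' quadrants.} I claim that $\{P>0,\,Q<0\}$ and $\{P<0,\,Q>0\}$ are forward invariant for $t>0$.

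In the first set, $P'=-2f'Q>0$, so $P$ stays positive. On the edge $Q=0$ with $P>0$ we have $2f'Q'=-bP<0$, so $Q$ cannot become positive. The second set is handled symmetrically.

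Now suppose $P(t_1)Q(t_1)\le0$ for some $t_1>0$. There are three cases.
\begin{itemize}
\item[(i)] If $P(t_1)=Q(t_1)=0$, uniqueness for the linear system gives $P\equiv Q\equiv0$ on $(0,\infty)$. Then $f''=f$, $f^2-(f')^2=1$ and $f'(0)=0$, so $f=\cosh t$ on $[0,\infty)$. Since $v$ solves a fourth-order ODE with analytic nonlinearity away from $v=0$, and agrees with $v_*$ on an interval, ODE uniqueness gives $f=\cosh$ on $\R$. This is the excluded case.
\item[(ii)] If exactly one of $P(t_1),Q(t_1)$ vanishes, the signs of $P'$ and $Q'$ computed above push the trajectory into one of the two invariant sets immediately after $t_1$.
\item[(iii)] Otherwise $(P,Q)(t_1)$ already lies in one of the invariant sets.
\end{itemize}
So in cases (ii) and (iii) we may assume $(P,Q)$ stays in one invariant set for all $t\ge t_2>t_1$.

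\emph{Step 3: contradiction from the endpoint behaviour.} This is the main step, and it needs only leading-order asymptotics. By Step~3 of the proof of Lemma~\ref{lem:radial-minimizer}, together with the Kelvin transform, $u(x)|x|^{n-4}$ is smooth in $x/|x|^2$. Since $u$ is radial, this function is even in $1/r$, so
\[
v(t)=\alpha e^{-\frac{n-4}{2}t}\bigl(1+\beta e^{-2t}+O(e^{-4t})\bigr),\qquad t\to\infty,
\]
with the expansion differentiable term by term. It follows that
\[
f=\kappa e^{t}\bigl(1+\gamma e^{-2t}+O(e^{-4t})\bigr).
\]
Consequently $f^2-(f')^2\to4\kappa^2\gamma$ is bounded, and $Q=f''-f=O(e^{-3t})\to0$.
\begin{itemize}
\item In $\{P<0,\,Q>0\}$ we have $2f'Q'=aQ-bP>0$. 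So $Q$ is increasing, hence $Q\ge Q(t_2)>0$, which contradicts $Q\to0$.
\item In $\{P>0,\,Q<0\}$ we have $2f'Q'=aQ-bP<0$. So $Q\le Q(t_2)=-\delta<0$, again contradicting $Q\to0$. Alternatively, $P'\ge2\delta f'$ forces $P\to\infty$, contradicting the boundedness of $f^2-(f')^2$.
\end{itemize}
Hence $PQ>0$ on $(0,\infty)$, and by reflection on $(-\infty,0)$.

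The delicate point is justifying the refined expansion of $v$ at $\pm\infty$, specifically the absence of an $e^{-t}$ correction, which comes from the radial evenness in $1/r$. If that proves awkward, the boundedness of $f^2-(f')^2$ together with $Q\to0$ is all that is needed, and I would try to extract these directly from $f\sim\kappa e^t$ combined with \eqref{eq:f-energy}.
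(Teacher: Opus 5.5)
Your Steps 1 and 2 are sound and follow the paper's structure: invariance of the two wrong-sign regions, the boundary cases of Claim~3, and your case (i), which is a tidy linear-uniqueness version of Claim~4. The gap is in Step~3. The lemma is a step in the proof of Theorem~\ref{thm:main}, which concerns an \emph{arbitrary} positive homoclinic. The only information available there is $v\in C^4(\R)$ with $v,v',v'',v'''\to0$, together with what Lemmas~\ref{lem:factorization}--\ref{lem:convex} provide: $f\to\infty$, $|f'|<f$ and $f''>0$.

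Your key input $Q=f''-f\to0$ (and the boundedness of $P$) comes from smoothness of the Kelvin transform at the origin. The paper has that only for minimizers, via the regularity theorem invoked in Step~3 of Lemma~\ref{lem:radial-minimizer}. It is not available for a general homoclinic, nor for a general radial $\dot H^2$ solution of \eqref{eq:el}. Your fallback presupposes $f\sim\kappa e^{t}$, which is itself a refinement of the endpoint behaviour; you would have to derive it by a separate asymptotic analysis of the fourth-order ODE. As written, your argument proves the lemma only for homoclinics that come from minimizers.

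The repair is already contained in your Step~1: you discarded the useful part of $a$. Since $a-2(n-2)\frac{(f')^2}{f}=f''+f>0$, your equation for $Q$ is equivalent to
\[
2f'f^{n-2}\Bigl(\frac{Q}{f^{n-2}}\Bigr)'=(f''+f)\,Q-b\,P ,
\]
which is just \eqref{eq:f-energy}. Hence $Qf^{2-n}$, not merely $Q$, is strictly decreasing on $\{P>0,\,Q<0\}$ and strictly increasing on $\{P<0,\,Q>0\}$.

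In the first region this gives $Q\le-\delta f^{n-2}$ for $t\ge t_2$. So $f''/f\le1-\delta f^{n-3}\to-\infty$, contradicting $f''>0$. In the second region it gives $f''\ge\delta f^{n-2}$. Multiplying by $2f'>0$ and integrating yields $(f')^2\ge (f'(t_2))^2+\frac{2\delta}{n-1}\bigl(f^{n-1}-f(t_2)^{n-1}\bigr)$. Thus $(f')^2/f^2\to\infty$, contradicting $(f')^2<f^2$.

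This is exactly the paper's argument. It uses only $f\to\infty$, $f''>0$ and $|f'|<f$, which is why the paper stresses that only the leading-order endpoint behaviour of the homoclinic is needed.
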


\begin{proof}
We first work on $(0, +\infty)$, where the preceding lemmas give
\[
f'>0,\qquad f''>0,\qquad (f')^2<f^2,\qquad
f(t)\longrightarrow\infty\quad(t\to\infty).
\]
We divide this proof into four claims.
Claims~1 and~2 exclude the two ways in which two factors $f(t)^2-f'(t)^2-1$ and $f''(t)-f(t)$  can have opposite signs.  Claims~3 and~4 then handle their
possible zeros.

\medskip
\noindent\textbf{Claim 1.}
\emph{There does not exist \(t_1>0\) which satisfies both \(f(t_1)^2-(f'(t_1))^2>1\) and \(f''(t_1)<f(t_1)\).}

\smallskip
%\noindent\textit{Proof of Claim 1.}
Suppose the set of the points satisfying $f^2-(f')^2>1$ and $f''<f$ is not empty.  In this set,
\eqref{eq:f-energy} and \eqref{eq:f-square-derivative} give
\begin{equation}\label{monotone1}
\left(\frac{f''-f}{f^{n-2}}\right)'<0,
\qquad \bigl(f^2-(f')^2\bigr)'>0,
\end{equation}
because \(f'>0\) and \(f''+f>0\).  Take a point $t_1$ in this set. For \(t> t_1\), as long as the solution remains
in this set, \eqref{monotone1} yields that
\[
\begin{aligned}
f(t)^2-f'(t)^2
&> f(t_1)^2-f'(t_1)^2>1,\\
\frac{f''(t)-f(t)}{f(t)^{n-2}}
&<\frac{f''(t_1)-f(t_1)}{f(t_1)^{n-2}}<0.
\end{aligned}
\]
So the solution would always remain inside this set.
In particular, for all such \(t\), we have
\[
\frac{f''(t)}{f(t)}
\le 1+
\frac{f''(t_1)-f(t_1)}{f(t_1)^{n-2}}\,f(t)^{n-3}
\longrightarrow-\infty,
\]
since \(f(t)\to\infty\).  This contradicts \(f''>0\), and we prove Claim~1.

\medskip
\noindent\textbf{Claim 2.}
\emph{There does not exist \(t_1>0\) which satisfies both \(f^2-(f')^2<1\) and \(f''>f\).}

\smallskip
%\noindent\textit{Proof of Claim 2.}
Suppose the set of points satisfying $f^2-(f')^2<1$ and $f''>f$ is not empty, and this set contains a point \(t_1>0\).  As Claim 1, in this set
\eqref{eq:f-energy} and \eqref{eq:f-square-derivative} yield
\[
\left(\frac{f''-f}{f^{n-2}}\right)'>0,
\qquad \bigl(f^2-(f')^2\bigr)'<0.
\]
Consequently, for \(t\ge t_1\), as long as the solution remains in this
set, we have
\[
\begin{aligned}
\frac{f''(t)-f(t)}{f(t)^{n-2}}
&\ge\frac{f''(t_1)-f(t_1)}{f(t_1)^{n-2}}>0,\\
f(t)^2-f'(t)^2
&\le f(t_1)^2-f'(t_1)^2<1.
\end{aligned}
\] 
The same argument as in Claim~1 therefore shows that
the set conditions and these bounds hold for every \(t\ge t_1\).
In particular, there holds that for every \(t\ge t_1\)
\[
f''(t)\ge
\frac{f''(t_1)-f(t_1)}{f(t_1)^{n-2}}\,f(t)^{n-2}.
\]
 Multiplying by \(2f'(t)>0\) and integrating
from \(t_1\) to \(t\),  we get
\[
\begin{aligned}
(f'(t))^2\ge{}&(f'(t_1))^2+\frac{2\bigl(f''(t_1)-f(t_1)\bigr)}
{(n-1)f(t_1)^{n-2}}
\bigl(f(t)^{n-1}-f(t_1)^{n-1}\bigr).
\end{aligned}
\]
  Dividing the above inequality by
\(f(t)^2\), we infer
\((f'(t))^2/f(t)^2\to\infty\) due to  \(f(t)\to\infty\) and $\frac{2(f''(t_1)-f(t_1))}
{(n-1)f(t_1)^{n-2}}>0$.  This contradicts
\((f')^2<f^2\). Thus, we prove Claim~2.

\medskip
\noindent\textbf{Claim 3.}
\emph{There does not exist \(t>0\) satisfies exactly one of \(f^2-(f')^2=1\) and \(f''=f\).}

\smallskip
%\noindent\textit{Proof of Claim 3.}
First suppose \(f''(t_1)=f(t_1)\) and
\(f(t_1)^2-f'(t_1)^2\ne1\) at some \(t_1>0\). 
By \eqref{eq:f-energy}, we have
\[
\left(\frac{f''-f}{f^{n-2}}\right)'
=\frac{n^2-4}{8f'f^n}
\bigl(f^2-(f')^2\bigr)\bigl(1-f^2+(f')^2\bigr),
\qquad t=t_1.
\]
Recall that $f'(t)>0$ for $t>0$. If
\(f(t_1)^2-f'(t_1)^2>1\), then $\left(\frac{f''-f}{f^{n-2}}\right)'$ is negative.  Therefore,
for \(t>t_1\) sufficiently close to \(t_1\), we have
\(f^2-(f')^2>1\) and \(f''<f\), contradicting Claim~1.
If \(f(t_1)^2-f'(t_1)^2<1\), the derivative is positive, so all $t>t_1$ close to $t_1$ satisfy \(f^2-(f')^2<1\) and \(f''>f\),
contradicting Claim~2.

Next suppose \(f(t_1)^2-f'(t_1)^2=1\) and
\(f''(t_1)\ne f(t_1)\).  By \eqref{eq:f-square-derivative},
\[
\bigl(f^2-(f')^2\bigr)'(t_1)
=-2f'(t_1)\bigl(f''(t_1)-f(t_1)\bigr).
\]
If \(f''(t_1)>f(t_1)\), then $(f^2-(f')^2)'(t_1)$ is negative, and   \(f^2-(f')^2<1\) and \(f''>f\) near
\(t>t_1\), contrary to
Claim~2.  If \(f''(t_1)<f(t_1)\), then $(f^2-(f')^2)'(t_1)$ is positive, and
 \(f^2-(f')^2>1\) and \(f''<f\) near  \(t>t_1\),
contrary to Claim~1.  This proves Claim~3.

\medskip
\noindent\textbf{Claim 4.}
\emph{If both factors vanish at a time \(t_1>0\), then
\(f(t)=\cosh t\) on \(\R\).}

\smallskip
%\noindent\textit{Proof of Claim 4.}
Suppose
\[
f(t_1)^2-f'(t_1)^2=1,\qquad f''(t_1)=f(t_1).
\]
Substituting these equalities into \eqref{eq:f-energy}, we infer
\[
2f'(t_1)\bigl(f'''(t_1)-f'(t_1)\bigr)=0.
\]
Since \(f'(t_1)>0\), we obtain \(f'''(t_1)=f'(t_1)\).
Because
\(f(t_1)^2-f'(t_1)^2=1\), the following positive function
\[
g(t)\coloneqq f(t_1)\cosh(t-t_1)+f'(t_1)\sinh(t-t_1)=\cosh \left(t-t_1+s\right),
\]
where $s=\operatorname{arsinh} f^{\prime}\left(t_1\right)$,  and $g(t)$ agrees with \(f\) through the third
derivative at \(t_1\).   Thus, by the uniqueness, we know that \(f=\cosh\left(t-t_1+s\right)\)  on
\(\R\), and then  \(f(t)=\cosh t\) due to  $f'(0)=0$.  This proves Claim~4.

For $(-\infty, 0)$, we reflect $f(t)=f(-t)$, and then complete the proof.
\end{proof}

%%%%%%%%%%%%%%%%%%%%%%%%%%%%%%%%

Now we give a proof to distinguish the solution by different values of $f$ at zero.

\begin{lem}
	\label{lem:ordering}
	Under \eqref{zero},
	\[
	\begin{array}{lll}
		f(0)=1&\Longrightarrow&v=v_*\quad\hbox{on }\R,\\
		f(0)<1&\Longrightarrow&v>v_*\quad\hbox{on }\R,\\
		f(0)>1&\Longrightarrow&v<v_*\quad\hbox{on }\R.
	\end{array}
	\]
\end{lem}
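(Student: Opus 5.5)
The plan is to convert the statement into a comparison between $f$ and $f_*=\cosh t$, using the sign information in Lemma~\ref{lem:trapping}. Since $v=f^{-\frac{n-4}{2}}$ and $v_*=\cosh^{-\frac{n-4}{2}}t$, we have $v>v_*$ iff $f<\cosh t$, and $v<v_*$ iff $f>\cosh t$. By the reflection $t\mapsto -t$ used at the end of Lemmas~\ref{lem:convex} and~\ref{lem:trapping}, it suffices to work on $[0,\infty)$. Set $G\coloneqq f^2-(f')^2-1$. By \eqref{eq:f-square-derivative}, $G'=-2f'(f''-f)$, and $f'>0$ on $(0,\infty)$ by \eqref{zero}.

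\emph{Case $f(0)=1$.} Suppose $f\not\equiv\cosh t$. Then Lemma~\ref{lem:trapping} gives $G(f''-f)>0$ on $(0,\infty)$. In particular $G$ never vanishes there, so it has a constant sign on $(0,\infty)$.
\begin{itemize}
\item If $G>0$, then $f''-f>0$, so $G'=-2f'(f''-f)<0$ and $G$ is strictly decreasing on $(0,\infty)$. Since $G(0)=f(0)^2-1=0$ (using $f'(0)=0$), this forces $G<0$ there, a contradiction.
\item If $G<0$, then $f''-f<0$, so $G'>0$ and $G$ increases from $G(0)=0$. Hence $G>0$, again a contradiction.
\end{itemize}
Thus $f=\cosh t$, i.e.\ $v=v_*$. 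A consistency check: at $t=0$, Lemma~\ref{lem:center} gives $(f'')^2=\frac{n^2}{4}-\frac{n^2-4}{4}=1$ with $f''>0$, so $f''(0)=1=f(0)$; this is compatible with both factors vanishing at $0$.

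\emph{Case $f(0)<1$.} Here $G(0)=f(0)^2-1<0$, so $G<0$ on some interval $(0,\varepsilon)$.
\begin{enumerate}
\item $f\neq\cosh t$, because $f(0)\ne1$. So Lemma~\ref{lem:trapping} applies: $G$ has no zero on $(0,\infty)$, hence $G<0$ on all of $(0,\infty)$.
\item Consequently $f''-f<0$ on $(0,\infty)$.
\item Put $w\coloneqq f-f(0)\cosh t$. Then $w''-w=f''-f<0$ on $(0,\infty)$ and $w(0)=w'(0)=0$.
\item Variation of constants gives $w(t)=\int_0^t\sinh(t-s)\,(f''-f)(s)\,ds<0$ for $t>0$.
\item Hence $f(t)<f(0)\cosh t<\cosh t$ for $t\ge0$ (at $t=0$ this is just $f(0)<1$), i.e.\ $v>v_*$.
\end{enumerate}

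\emph{Case $f(0)>1$.} The argument is symmetric. Now $G>0$ on $(0,\infty)$, so $f''>f$ there. With the same $w$, the formula in step 4 gives $w>0$ for $t>0$. Hence $f>f(0)\cosh t>\cosh t$, i.e.\ $v<v_*$.

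In all three cases the reflected profile $\widetilde f(t)=f(-t)$ satisfies the same hypotheses and has the same value $\widetilde f(0)=f(0)$, which handles $t<0$.

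The main point requiring care is the degenerate case $f(0)=1$. Both factors vanish at $t=0$, where $f'(0)=0$, so Claim~4 of Lemma~\ref{lem:trapping} cannot be invoked there. This is why the proof uses the monotonicity of $G$ coming from \eqref{eq:f-square-derivative} rather than the local uniqueness argument of Claim~4.
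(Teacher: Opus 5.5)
Your proposal is correct and follows essentially the same route as the paper. For $f(0)=1$ you combine Lemma~\ref{lem:trapping} with $G'=-2f'(f''-f)$; the paper phrases this as strict decrease of $G^2$ from $0$, where you split on the sign of $G$. For $f(0)\neq 1$ you use the same identity $f(t)-f(0)\cosh t=\int_0^t\sinh(t-s)\,(f''-f)(s)\,ds$, and both arguments handle $t<0$ by reflection.
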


\begin{proof}
	Suppose \(f\ne\cosh\).  Lemma~\ref{lem:trapping} and
	\eqref{eq:f-square-derivative} give
	\[
	\begin{aligned}
		\left[\bigl(f^2-(f')^2-1\bigr)^2\right]'
		&=-4f'\bigl(f^2-(f')^2-1\bigr)(f''-f) <0\qquad\text{for all}\ t>0.
	\end{aligned}
	\]
	If \(f(0)=1\), then \(f(0)^2-f'(0)^2-1=0\), so $(f^2-(f')^2-1)^2$ would
	decrease strictly from zero.  This is impossible, proving the first
	assertion.
	
	If \(f(0)\ne1\), Lemma~\ref{lem:trapping} shows that
	\(f^2-(f')^2-1\) cannot
	vanish on either half-line, and \[
\sgn\bigl(f^2-(f')^2-1\bigr)
=\sgn(f''-f)
=\sgn\bigl(f(0)^2-1\bigr),\qquad t\ne0.
\]
	For \(t>0\), the elementary identity
	\[
	f(t)-f(0)\cosh t
	=\int_0^t\sinh(t-s)\bigl(f''(s)-f(s)\bigr)\,ds
	\]
	thus yields
	\[
	\begin{array}{lll}
		f(0)<1&\Longrightarrow&f(t)<f(0)\cosh t<\cosh t,\\
		f(0)>1&\Longrightarrow&f(t)>f(0)\cosh t>\cosh t.
	\end{array}
	\]
	The same inequalities hold for \(t<0\) by reflection, and their strict
	versions at zero follow directly from \(f(0)\ne1\).  Since
	\(v=f^{-(n-4)/2}\) and \(v_*=\sech^{(n-4)/2}t\), we complete the proof.
\end{proof}

%%%%%%%%%%%%%%%%%%%%%%%

We now  are ready to finish the proof by proving the following lemma.
\begin{lem}
	\label{lem:no-ordering}
	Two positive homoclinic solutions \(v,w\) of \eqref{eq:normalized}
	cannot satisfy \(v>w\) on \(\R\).
\end{lem}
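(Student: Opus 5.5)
The plan is to argue by contradiction and exploit that $L_{\frac{n-4}{2}}L_{\frac n2}$ is formally self-adjoint on $\R$. Suppose $v>w>0$ are two positive homoclinics of \eqref{eq:normalized}. Both decay exponentially at $\pm\infty$ together with three derivatives; this follows from \eqref{homoclinic limits} and the bounds $|v'/v|<\frac{n-4}{2}$ of Lemma~\ref{lem:factorization}. Hence
\[
\int_\R \Bigl(w\,L_{\frac{n-4}{2}}L_{\frac n2}v-v\,L_{\frac{n-4}{2}}L_{\frac n2}w\Bigr)dt=0 .
\]
Substituting the equation, and writing $p\coloneqq\frac{n-2}{n-4}$, we obtain the signed identity
\begin{equation*}
0=\int_\R\Bigl(w\,v^{\frac2{n-4}}L_{\frac{n-2}{2}}(v^{p})-v\,w^{\frac2{n-4}}L_{\frac{n-2}{2}}(w^{p})\Bigr)dt .
\end{equation*}
The aim is to show that the right-hand side is strictly positive whenever $v>w$, which gives the contradiction.

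To get a sign, I will integrate by parts once in $L_{\frac{n-2}{2}}=-\frac{d^2}{dt^2}+\frac{(n-2)^2}{4}$; the boundary terms vanish because of the exponential decay. The zeroth-order part contributes
\[
\frac{(n-2)^2}{4}\int_\R vw\bigl(v^{\frac4{n-4}}-w^{\frac4{n-4}}\bigr)dt>0,
\]
which is positive since $v>w$. The first-order part is
\[
\int_\R\Bigl((wv^{\frac2{n-4}})'(v^p)'-(vw^{\frac2{n-4}})'(w^p)'\Bigr)dt .
\]
I will rewrite it using the ratio $\rho\coloneqq w/v\in(0,1)$, so that $w=\rho v$. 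Expanding gives three kinds of terms: terms with $(v')^2$ multiplied by $\rho-\rho^{p+\frac2{n-4}}=\rho(1-\rho^{\frac4{n-4}})$, a cross term $\rho'v'$, and a term in $(\rho')^2$. The $(v')^2$ coefficient has the good sign, the same factor $1-\rho^{\frac4{n-4}}$ as in the zeroth-order part.

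The cross term is handled by an integration by parts that moves the derivative from $\rho$ onto $v'$. This produces $v''$, which can then be recombined with the zeroth-order part into $L_{\frac{n-2}{2}}(v^p)$ times a positive function of $\rho$ vanishing at $\rho=1$. That expression is positive by Corollary~\ref{cor:nonlinear-positive}. If the recombination is not exact, I would instead use the symmetric choice of roles (exchange $v$ and $w$, weight with $\rho^{-1}$) and average the two identities, so that the cross term cancels.

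The main obstacle is the $(\rho')^2$ term, together with completing the square in the cross term. I would first try to prove that the resulting quadratic form in $(v',\rho'v)$ has positive coefficients, using $|v'/v|<\frac{n-4}{2}$ from Lemma~\ref{lem:factorization} to control the indefinite part. If this fails, the fallback is to use the full fourth-order structure: test instead with $T_\pm$, whose positivity and factorization \eqref{eq:Tminus}--\eqref{eq:Tplus} give extra perfect squares. Once strict positivity of the right-hand side is established, it contradicts the vanishing identity, so no strictly ordered pair exists. Combined with Lemma~\ref{lem:ordering} applied to $v$ and $v_*$, this forces $f(0)=1$, hence $v=v_*$.
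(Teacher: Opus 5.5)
\textbf{There is a genuine gap.} Your overall strategy matches the paper's: use self-adjointness of $L_{\frac{n-4}{2}}L_{\frac n2}$, substitute the equation, and show the resulting integral is strictly positive via Corollary~\ref{cor:nonlinear-positive}. However, the decisive positivity step is left open, and the route you put first fails.

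Write $m\coloneqq\frac{n}{n-4}$ and $\kappa_v\coloneqq L_{\frac{n-2}{2}}(v^p)/v^p$, and likewise $\kappa_w$. Your identity then reads $0=\int(v^mw\,\kappa_v-vw^m\kappa_w)\,dt$. Suppose you carry out exactly the recombination you describe, absorbing the $(v')^2$ and cross terms into $L_{\frac{n-2}{2}}(v^p)$ with weight $\rho-\rho^m$. It closes exactly, and what remains (modulo boundary terms) is
\[
0=\int v^{m+1}(\rho-\rho^m)\,\kappa_v\,dt-\frac{2(n-2)}{(n-4)^2}\int v^{m+1}\rho^{m-2}(\rho')^2\,dt .
\]
So the $(\rho')^2$ term you flagged survives with a strictly negative coefficient, and nothing is left to complete a square against. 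The bound $|v'/v|<\frac{n-4}{2}$ cannot help, because this term involves only $\rho'$. Testing with $T_\pm$ is therefore not needed. In short, ``the quadratic form has positive coefficients'' is false for this choice.

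The missing idea is to attach the curvature quotient to the \emph{smaller} function. Split
\[
v^mw\kappa_v-vw^m\kappa_w=(v^mw-vw^m)\kappa_w+v^mw(\kappa_v-\kappa_w).
\]
With $\phi=\log v$ and $\psi=\log w$ one has $\kappa_v-\kappa_w=p(\psi-\phi)''+p^2\bigl((\psi')^2-(\phi')^2\bigr)$. A single integration by parts, using $p-1=m-p=\frac{2}{n-4}$, gives
\[
\int_{-T}^{T}v^mw(\kappa_v-\kappa_w)\,dt=\frac{2(n-2)}{(n-4)^2}\int_{-T}^{T}v^mw\Bigl(\frac{v'}{v}-\frac{w'}{w}\Bigr)^2dt+\frac{n-2}{n-4}\Bigl[v^mw'-v^{\frac{4}{n-4}}wv'\Bigr]_{-T}^{T}.
\]
When $v>w$, both integrands are now nonnegative: the first term is positive by Corollary~\ref{cor:nonlinear-positive} applied to $w$. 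This is precisely \eqref{eq:picone-integrated}.

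Putting $\kappa_v$ in front instead produces $-\frac{2(n-2)}{(n-4)^2}\int vw^m\bigl(\frac{v'}{v}-\frac{w'}{w}\bigr)^2$, which is exactly your computation. So the ``exchange the roles'' option you list as a fallback is the actual proof, and the choice matters. Averaging the two identities also works, because the remainder becomes $\frac{n-2}{(n-4)^2}\int(v^mw-vw^m)\bigl(\frac{v'}{v}-\frac{w'}{w}\bigr)^2\ge 0$; the mechanism is this sign, not a cancelling cross term.

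Separately, $|v'/v|<\frac{n-4}{2}$ is a growth bound and does not yield exponential decay. You do not need decay. Work on $[-T,T]$: every boundary term is a product of $v,w$ and their derivatives up to order three, hence $o(1)$ by the homoclinic limits. Meanwhile the nonnegative integrand makes the left side at least its (positive) integral over $[-1,1]$, which gives the contradiction.
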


\begin{proof}
	For \(T>0\), subtracting the equations tested against \(w\) and \(v\), by integration by parts on \([-T,T]\) and expansion of the powers, we  have
    \begin{equation}\label{eq:picone-integrated}
	\begin{aligned}
		&\int_{-T}^{T}\Biggl[
		\frac{2(n-2)}{(n-4)^2}\,
		v^{\frac n{n-4}}w
		\left(\frac{v'}v-\frac{w'}w\right)^2
		+\left(v^{\frac n{n-4}}w-vw^{\frac n{n-4}}\right)
		\frac{L_{\frac{n-2}{2}}\!\left(w^{\frac{n-2}{n-4}}\right)}
		{w^{\frac{n-2}{n-4}}}
		\Biggr]dt\\
		&=
		\frac{4}{(n-4)(n+2)}
		\Biggl[
		wv'''-w'v''+w''v'-w'''v
		-\frac{n^2-4n+8}{2}(wv'-w'v)
		\Biggr]_{-T}^{T}\\
		&\qquad+
		\frac{n-2}{n-4}
		\left[
		w v^{\frac4{n-4}}v'-v^{\frac n{n-4}}w'
		\right]_{-T}^{T}=o(1)\quad \text{as}\,\, T\rightarrow +\infty,
	\end{aligned}
    \end{equation}
	where we have used homoclinic limits \eqref{homoclinic limits}.

	Suppose now that \(v>w\) everywhere. 
By Corollary~\ref{cor:nonlinear-positive} and
\(
v^{\frac n{n-4}}w-vw^{\frac n{n-4}}=vw\left(v^{\frac4{n-4}}-w^{\frac4{n-4}}\right)>0,
\)
the left-hand side of \eqref{eq:picone-integrated} is strictly positive with a positive lower bound. This contradicts \eqref{eq:picone-integrated} as $T\to\infty$.

Similarly, interchanging $v$ and $w$ in \eqref{eq:picone-integrated} excludes the case $v<w$ everywhere.
\end{proof}

\begin{proof}[Proof of Theorem~\ref{thm:main}]
	Let \(v\) be a normalized positive homoclinic,
	and center \(f\coloneqq v^{-2/(n-4)}\) by Lemma~\ref{lem:center}.
	If \(f(0)=1\), then \(v=v_*\) by Lemma~\ref{lem:ordering}.
	Otherwise that lemma gives either \(v>v_*\) or \(v<v_*\) on
	\(\R\), contradicting to Lemma~\ref{lem:no-ordering}.
	Thus every normalized positive homoclinic is a translate of \(v_*\).	
\end{proof}

\medskip

{\em On the use of AI:} We used ChatGPT to assist with language editing and presentation. All mathematical arguments and proofs in the final manuscript were checked and written by the authors.

\bibliography{bibSobolev}
\bibliographystyle{amsplain}
\end{document}